\newtheorem{lemma}{Lemma}%[section]
\newtheorem{theorem}[lemma]{Theorem}
\newtheorem{proposition}[lemma]{Proposition}
\newtheorem{example}[lemma]{Example}
\newtheorem{remark}[lemma]{Remark}
\newtheorem{assumption}[lemma]{Assumption}
\newcommand{\Cc}{\mathscr{C}}
\newcommand{\dsD}{\mathds{D}}
\newcommand{\Ee}{\mathscr{E}} 
\newcommand{\N}{\mathds{N}}  
\newcommand{\R}{\mathds{R}}   %  real numbers
\newcommand{\Ss}{\mathscr{S}} % \S paragraph sign.
\newcommand{\be}{\begin{equation}}
\newcommand{\ee}{\end{equation}}
\newcommand{\bea}{\begin{eqnarray*}}
\newcommand{\eea}{\end{eqnarray*}}
\newcommand{\ben}{\begin{enumerate}[a)]} % needs package 'enumerate'
\newcommand{\bEN}{\begin{enumerate}[1.]}
\newcommand{\bECR}{\begin{enumerate}[(I)]} % begin enumerate capital roman
\newcommand{\bESR}{\begin{enumerate}[(i)]} % begin enumerate small roman
\newcommand{\een}{\end{enumerate}}
\newcommand{\eEN}{\end{enumerate}}
\newcommand{\var}{\mathrm{var}}
\DeclareMathOperator*{\vectorize}{vec}
\DeclareMathOperator*{\argmin}{arg\,min}
\DeclareMathOperator*{\argmax}{arg\,max}
\DeclareMathOperator*{\ave}{ave}
\DeclareMathOperator*{\trace}{tr}
\renewcommand*{\vec}{\vectorize\!}
\DeclareMathOperator*{\mat}{mat}
\def\matpp{\mat\nolimits_{p\times p}}
\newcommand{\X}{\mathbb{X}}
\newcommand{\Rpp}{\R^{p \times p}}
\def\Sp{\Ss^+_p}
\def\hK{\hat{K}}
\def\hk{\hat{k}}
\def\hS{\hat{S}\!}
\def\hV{\hat{V}}
\def\hmu{\hat{\mu}}
\def\hSigma{\hat{\Sigma}}
\def\tSp{\tilde{\Ss}^+_p}
\def\tQKG{\tilde{Q}_{K(G)}}
\def\tQDG{\tilde{Q}_{D(G)}}
\def\tQK{\tilde{Q}_K}
\def\tQD{\tilde{Q}_D}
\def\tPG{\tilde{P}_G}
\def\mle{\mbox{\scriptsize mle}}
\renewcommand\arraystretch{1.05}
\begin{document}
\selectlanguage{british}
\title[Non-decomposable elliptical graphical models]{Robust estimators for non-decomposable elliptical graphical models}
\author{D. Vogel and D. E. Tyler}

\address{
	Fakult\"at f\"ur Mathematik, Ruhr-Universit\"at Bo\-chum, 44780 Bochum, Germany
}
\email{vogeldts@rub.de}

\address{
	Department of Statistics, 
	Rutgers University, %\\
	Piscataway, NJ 08854, USA}

\email{dtyler@rci.rutgers.edu}

\thanks{Research supported in part by the 
Collaborative Research Grant 823, Project C3 {\em Analysis of Structural 
Change in Dynamic Processes}, of the German Research 
Foundation. }
\begin{abstract}
Asymptotic properties of scatter estimators for elliptical graphical models are studied. Such models impose a given pattern of zeros on the inverse of the shape matrix of an elliptically distributed random vector. In particular, we introduce the class of graphical $M$-estimators and compare them to plug-in $M$-estimators. It turns out that, under suitable conditions, both approaches yield the same asymptotic efficiency. Furthermore, the results of this paper apply to both decomposable and non-decomposable graphical models and so generalize the results for decomposable models given by \citet{Vogel2011} for the plug-in $M$-estimators.
\end{abstract}

\keywords{
affine equivariance; delta method; deviance test; Gaussian graphical model; $M$-estimator; partial correlation.
}

\maketitle
\tableofcontents

\section{Introduction \& motivation: non-decomposable covariance selection models}

The research presented in this article originates from the authors' interest in robustifying and generalizing classical Gaussian graphical modelling. We outline the idea.

Suppose we observe realizations of a $p$-dimensional random vector $X = (X_1,\ldots,X_p)$ with non-singular covariance matrix $\Sigma$. Its inverse $K = \Sigma^{-1}$ is called the concentration matrix. A zero entry in $K$ at position $(i,j)$ for $i,j = 1,\ldots,p$, $i \neq j$,  means that $X_i$ and $X_j$ are partially uncorrelated given all other components of $X$. This means, if we denote by $(\hat{X}_i, \hat{X}_j)$ the orthogonal projections of $(X_i, X_j)$ onto the space of all affine linear functions of the other components of $X$, the residuals $X_i - \hat{X}_i$ and $X_j - \hat{X}_j$ are uncorrelated.     

When studying more than two variables jointly, the partial correlations among each two of them are arguably more informative than the marginal correlations, because they allow to assess to what degree the dependence between two variables is explained by their joint dependence on other variables. In fact, considering only marginal correlation may lead to wrong conclusion, which is nicely exemplified by Simpson's paradox \citep[e.g.][Chapter 1.4]{Edwards2000}. We are therefore interested in the statistical task of determining the zero entries of $K$.

We define the partial correlation graph $G = (V,E)$ of $X$ by setting 
$V = \{1,\ldots,p\}$ and 
$E = \{ \, \{i,j\} \mid  i,j =  1,\ldots,p,\ j < i, \ K_{i,j} \neq 0 \}$, where $K = (K_{i,j})_{i,j=1,\ldots,p}$. Thus, the nodes $i$ and $j$ are connected in $G$ by an undirected edge if and only if $X_i$ and $X_j$ are partially correlated given all other variables. The task of determining the zero-entries of $K$ can be rephrased to find the partial correlation graph of the data.

Let $\Ss_p$ and $\Sp$ denote the set of all symmetric $p \times p$ matrices and the set of all positive definite $p \times p$ matrices, respectively.
For any graph $G = (V,E)$ let further $\Ss^+_p(G)$ be the set of matrices $A \in \Sp$ with zero entries at off-diagonal positions specified by $G$, i.e., $A_{i,j} = 0$ 
for all $i,j =  1,\ldots,p$, $j \neq i$, with $\{i,j\} \notin E$. We call any set of $p$-dimensional probability measures with the common property that they possess a concentration matrix $K \in \Ss^+_p(G)$ a covariance selection model induced by $G$. We call a covariance selection model consisting of all regular, i.e.\ with full rank covariance matrix, $p$-variate Gaussian distributions a Gaussian graphical model and denote it by $N_p(G)$, i.e., 
$N_p(G) = \{ N_p(\mu,K^{-1}) \mid \mu \in \R^p, K \in \Ss^+_p(G) \}$.

For a Gaussian vector $X = (X_1,\ldots,X_p)$ the partial uncorrelatedness of $X_i$ and $X_j$, $i,j = 1, \ldots, p$, $j \neq i$, is equivalent to their conditional independence given the other components of $X$. Usually, the terms covariance selection model and Gaussian graphical model are used synonymously for families of Gaussian distributions. We prefer to distinct between both, since we focus on the analysis of second moments and will also study covariance selection models for non-Gaussian distributions. We continue by reviewing some aspects of the statistical modelling of Gaussian graphical models.

The parametric family $N_p(G)$ is a regular exponential model parametrized by $\mu$ and $K$, in total $p(p+3)/2 - q$ parameters, where $q$ is the number of absent edges in $G$, and the maximum likelihood paradigm offers a way of efficient estimation and testing.

\medskip
{\sc Maximum likelihood estimator.}
Based on independent and identically distributed observations $X_1,\ldots,X_n$ stemming from $N_p(G)$ for some graph $G = (V,E)$, the maximum likelihood estimator $\hSigma_G$ of $\Sigma$ in the model $N_p(G)$ is defined for $n > p+1$ as the solution of 
\be \label{eq:h_G}
\begin{cases}
 \  (\hSigma_G)_{i,j} = (\hSigma_n)_{i,j},  		&	\qquad    \{i,j\} \in E \  \vee  \ i = j, \\%[1.0ex]
 \  (\hSigma_G^{-1})_{i,j} = 0,    							& \qquad    \{i,j\} \notin E,  \ i \neq j, \\
\end{cases}
\ee  
where $\hSigma_n$ is the sample covariance matrix computed from $X_1,\dots,X_n$. A unique and positive definite solution $\hSigma_G$ of (\ref{eq:h_G}) exists for any positive definite $\hSigma_n$, see also \citet{Grone1984}. Furthermore there are algorithms that have been shown to converge to the right solution. The general likelihood theory for exponential families yields that $\hSigma_G$ is asymptotically normal for any $G$.

\medskip
{\sc Likelihood ratio test.}
Consider two nested graphs $G_0 = (V,E_0)$ and $G = (V,E)$ with $V = \{1,\ldots,p\}$ and $E_0 \subsetneqq E$. The likelihood ratio for the hypothesis $K \in \Ss^+_p(G_0)$ in the model $N_p(G)$ is 
$L_n(G_0,G) = ( \det\hSigma_{G} / \det\hSigma_{G_0} )^{n/2}$.
Under the null hypothesis $K \in \Sp(G_0)$, the related deviance test statistic
\[
	D_n(G_0,G) = - 2 \log L_n(G_0,G) = n \left(\log\det\hSigma_{G_0} - \log\det\hSigma_{G}\right)
\]
converges for $n \to \infty$ in distribution to a $\chi^2$ distribution with $q_0-q$ degrees of freedom, where $q_0$ and $q$ are the numbers of absent edges in $G_0$ and $G$, respectively.

\medskip
{\sc Model search.}
Many classical model selection procedures consist of a repeated application of the deviance test.
For instance, a simple model search, known as backward elimination, starts with the saturated model and, in each step, removes one edge. The deviances between the current model and all models with exactly one edge less are computed. The edge with the smallest deviance difference is deleted, unless all edges are significant. %Then the algorithm stops.

\medskip
A serious drawback of this likelihood approach, which was originated by \citet{Dempster1972} and is treated in detail in \citet{Lauritzen1996}, is the lack of robustness, and alternatives have been proposed. \citet{Vogel2011} study estimators of the type $\hS_G = h_G(\hS_n)$ within the class of elliptical distributions, where  
\[
	h_G:\Ss^+_p \to \Ss^+_p
\]
denotes the function that maps $\hSigma_n$ to $\hSigma_G$, cf.~(\ref{eq:h_G}), and $\hS_n$ can be any affine equivariant and asymptotically normal scatter estimator. See Assumption \ref{ass:1} for a precise statement of these terms. In this more general setting, the asymptotic normality of $h_G(\hS_n)$ and the convergence of %the deviance test statistic
\be \label{eq:gen.dev}
	D_n(G_0,G_1,\hS_n) = n \{ \log h_{G_0}(\hS_n) - \log h_{G_1}(\hS_n) \}
\ee 
under $G_0$ can not be deduced from general likelihood results. \citet{Vogel2011} give proofs for decomposable models.
An undirected graph $G = (V,E)$ and any corresponding covariance selection model is called decomposable or chordal or triangulated, if every cycle of length greater than 3 possesses a chord. For such graphs $G$, the function $h_G$ has an explicit form, from which its derivative can be computed. By means of the delta method one can derive the asymptotic normality of $h_G(\hS_n)$, and subsequently the $\chi^2$ limit of $D_n(G_0,G_1,\hS_n)$. 

One main objective of this paper is to extend this approach to non-decomposable models. We will give an explicit expression for the asymptotic covariance matrix of the plug-in estimator $h_G(\hS_n)$. We further introduce an alternative class of scatter estimators under the covariance selection model $G$, which we call graphical $M$-estimates. We show that the graphical $M$-estimator is asymptotically equivalent to the plug-in estimator $h_G(\hS_n)$ if $\hS_n$ is the corresponding unrestricted $M$-estimate.

\section{Main result}
\label{sec:main}

In this section we give the derivative of the function $h_G$. Towards this end, we have to introduce some notation.
The Kronecker product $A \otimes B$ of two matrices $A,B \in \R^{p \times p}$ is defined as the $p^2 \times p^2$ matrix with entry $a_{i,j} b_{k,l}$ at position $((i-1)p + k, (j-1)p + l)$. Let $\vec A$ be the $p^2$-vector obtained by stacking the columns of $A \in \R^{p \times p}$ from left to right underneath each other and $\matpp: \R^{p^2} \to \R^{p \times p}$ denote the inverse operator to $\vec$\, for $p \times p$ matrices.
Letting $e_1, \ldots, e_p$ be the unit vectors in $\R^p$, we further define the matrices
\[
	  K_p = \sum\nolimits_{i=1}^p \sum\nolimits_{j=1}^p e_i^{} e_j^T \otimes e_j^{} e_i^T,
	  \qquad
   %\quad \ \mbox{and} \quad \
  M_p = \frac{1}{2}\left( I_{p^2} + K_p \right),
\]
where $I_{p^2}$ denotes the $p^2 \times p^2$ identity matrix. The matrix $K_p$ is orthogonal and is commonly referred to as the commutation matrix. It can also be viewed as the transpose operator since $K_p \vec A = \vec A^T$. We call the idempotent matrix $M_p$ 
the symmetrization matrix since it maps $\vec A$ to $\frac{1}{2} \vec(A + A^T)$. 
Further, let $m = p(p+1)/2$ and, for any matrix $A \in \Ss_p$, let $v(A)$ be the $m$-vector that is obtained by deleting the super-diagonal elements of $A$ from $\vec A$. The duplication matrix
%, cf.\ \citet[][p.~49]{Magnus1999} 
$D_p \in \R^{p^2 \times m}$ is the matrix that maps $v(A)$ to $\vec A$. It has exactly one 1-entry in each row and is zero otherwise. Its Moore-Penrose inverse $D^+_p = (D_p^T D_p)^{-1}D_p^T$ then reduces $\vec A$ to $v(A)$ for any symmetric matrix $A \in \Rpp$. We have the following identities:
\[
   D_p D_p^+ = M_p, \qquad D_p^+ D_p = I_m \quad \mbox{and} \quad M_p (A \otimes A) M_p = M_p (A \otimes A) = (A \otimes A) M_p
\]
for any $A \in \Rpp$. More on these concepts and their properties can be found in \citet{Magnus1999}.
On the set $\Pi_p = \{ (i,j) \mid i,j = 1,\ldots, p\}$ of the positions of a $p \times p$ matrix we declare a strict ordering $\prec_p$ by  
\[
	(i,j) \prec_p (k,l)\quad  \mbox{if} \quad \ (j-1)p + i \le (l-1)p + k  \quad\mbox{for } \ (i,j), (k,l)  \in \Pi_p.  
\]
This corresponds to the ordering imposed by the operation $\vec A$ on the components of $A$.
For any subset $Z = \{ z_1,\ldots,z_r \} \subset \Pi_p$, where $z_k = (i_k, j_k)$ ($k = 1,\ldots,r$) and $z_1 \prec_p \ldots \prec_p z_r$, define the matrix $Q_Z \in \R^{r \times p^2}$ as follows: each line consists of exactly one entry 1 and zeros otherwise. The $1$-entry in line $k$ is in column $(j_k - 1)p + i_k$. Thus $Q_Z\!\vec A$ contains those elements of $A$ that are specified by $Z$ in the order they appear in $\vec A$. 

For a graph $G = (V, E)$ with $V = \{ 1,\ldots,p\}$ we define the following subsets of $\Pi_p$,
\[
	D(G) = \{\, (i,j) \mid i,j = 1,\ldots,p,\ j < i,\ \{i,j\} \notin E\, \}, 
\]\[
	K(G) = 
	\{\, (i,j)\mid i,j = 1,\ldots,p,\ j < i,\ \{i,j\} \in E \,\} \ \cup \ 
	\{\, (i,i) \mid i = 1,\ldots,p \,\}.
\]
Thus, $D(G)$ gathers all sub-diagonal zero-positions that $G$ enforces on a concentration matrix, and $K(G)$ collects all diagonal positions and all sub-diagonal edge positions. The sets $D(G)$ and $K(G)$ contain $q$ and $m-q$ elements, respectively, where $q$ is the number of absent edges in $G$. We write $Q_D$ and $Q_K$ short for $Q_{D(G)}$ and $Q_{K(G)}$, respectively. Note that $Q_{D(G) \cup K(G)} \vec A = D_p^+ \vec A = v(A)$ for any $A \in \Ss_p$.
Finally, let $\tQD = Q_D D_p$ and $\tQK = Q_K D_p$. We are now ready to formulate our main result.
\begin{proposition} \mbox{{}\\{}}
\label{prop:1}
\bECR
\item \label{th:1.1}
The function $h_G$ is continuously differentiable on $\Sp$.
\item \label{th:1.2} 
The derivative of $h_G$ at $A \in \Sp$ is
\be \label{eq:derivative1}
\dsD h_G(A) \ = \ 
 M_p \ - \  
 M_p Q_D^T \left\{ Q_D M_p (A_G^{-1} \otimes A_G^{-1}) Q_D^T \right\}^{-1} Q_D (A_G^{-1} \otimes A_G^{-1}) M_p ,
\ee
where $A_G$ denotes $h_G(A)$.
\een
\end{proposition}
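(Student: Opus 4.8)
The plan is to realise $h_G$ via the implicit function theorem. By the defining relations \eqref{eq:h_G}, $B=h_G(A)$ is the unique matrix in $\Sp$ with $Q_K\vec B=Q_K\vec A$ and $Q_D\vec(B^{-1})=0$, and existence and uniqueness for every $A\in\Sp$ is already guaranteed. Parametrising symmetric matrices by their lower halves $v(\cdot)$, I would set $F(A,B)=\big(\tQK v(B)-\tQK v(A),\ Q_D\vec(B^{-1})\big)\in\R^{m-q}\times\R^{q}$, so that $F(A,h_G(A))=0$. Since matrix inversion is $C^\infty$ on $\Sp$, so is $F$; and differentiating with $\vec(d(B^{-1}))=-(B^{-1}\otimes B^{-1})\vec(dB)$ shows that, in these coordinates, the partial derivative of $F$ in $B$ at $B=A_G$ is the $m\times m$ block matrix $J$ with rows $\tQK$ and $-Q_D(A_G^{-1}\otimes A_G^{-1})D_p$. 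Once $J$ is shown to be nonsingular, the implicit function theorem delivers part (\ref{th:1.1}) --- indeed $C^\infty$ differentiability --- and identifies $\dsD h_G(A)$ as the map sending $\vec(dA)$ to the unique symmetric solution of the linearised constraints.

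Write $W=A_G^{-1}\otimes A_G^{-1}$. The heart of the argument --- and the step I expect to be the main obstacle --- is the nonsingularity of $J$. Suppose $Jv(Y)=0$ for a symmetric $Y$, i.e.\ $Q_K\vec Y=0$ and $Q_D W\vec Y=0$. The first condition forces $Y$ to vanish on the diagonal and on every edge position, so its support lies in the non-edge off-diagonal positions. The second says that $Z=A_G^{-1}YA_G^{-1}$ vanishes on the positions of $D(G)$ and hence, being symmetric, on all non-edge off-diagonal positions. Consequently the entrywise product of $Y$ and $Z$ is identically zero, so $\trace(YZ)=\trace(YA_G^{-1}YA_G^{-1})=0$. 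Setting $U=A_G^{-1/2}YA_G^{-1/2}$, this reads $\trace(U^2)=0$, which forces $U=0$ and therefore $Y=0$. Thus $J$ is injective and, being square, nonsingular.

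It remains to verify that the expression in \eqref{eq:derivative1} is indeed the unique solution of the linearised system, which (for symmetric $\vec(dA)$) asks for $y$ with $M_p y=y$, $Q_K y=Q_K\vec(dA)$ and $Q_D W y=0$. Symmetry is immediate from the leading factor $M_p$. For the zero-pattern constraint I would invoke the stated identity $M_p(A_G^{-1}\otimes A_G^{-1})=(A_G^{-1}\otimes A_G^{-1})M_p$, i.e.\ $WM_p=M_pW$, to recognise $Q_D W M_p Q_D^T=Q_D M_p W Q_D^T$ as exactly the matrix being inverted, whence the two factors cancel and $Q_D W y=Q_D W\vec(dA)-Q_D W\vec(dA)=0$. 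For the matching constraint the key observation is that $M_p Q_D^T$ is supported only on the positions of $D(G)$ and their transposes, all of which are disjoint from the diagonal and edge positions read off by $Q_K$; hence $Q_K M_p Q_D^T=0$ and the correction term drops out, leaving $Q_K y=Q_K\vec(dA)$. Finally the inverse in \eqref{eq:derivative1} exists: with $P=M_p Q_D^T$ the matrix equals $P^{T}WP$, which is positive definite because $W$ is and because $P$ has trivial kernel (the symmetrisation of a nonzero strictly sub-diagonal matrix is nonzero). Since the formula meets all three requirements, the uniqueness established above identifies it as $\dsD h_G(A)$, proving part (\ref{th:1.2}).
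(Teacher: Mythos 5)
Your proof is correct, and it rests on the same skeleton as the paper's --- the implicit function theorem applied to the defining equations (\ref{eq:h_G}) --- but the execution differs in two substantive ways, both worth noting. The paper first eliminates the matching constraints and applies the implicit function theorem only to the reduced $q$-dimensional system $H_G\left(\langle x; y\rangle_G\right) = 0$ in the unknown non-edge covariances $y$; it then \emph{derives} (\ref{eq:derivative1}) constructively by implicit differentiation, followed by the bookkeeping needed to pass from $\dsD \tilde{h}_G$ to $\dsD h_G$ (the matrix $M_{p,G}$ and the identity $M_p - M_{p,G} = 2M_pQ_D^TQ_DM_p$). You instead keep the full $m$-dimensional system in $v(B)$, prove nonsingularity of its Jacobian $J$, and then \emph{verify} that the stated formula satisfies the linearised constraints, concluding by uniqueness; this trades the paper's algebraic assembly for a check that requires knowing the answer in advance. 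The two routes hinge on the same invertibility: block-triangularising your $J$ against the orthogonal splitting $\tPG$ (using $\tQK\tQK^T = I_{m-q}$ and $\tQK\tQD^T = 0$) shows that $J$ is nonsingular exactly when the paper's $q\times q$ matrix (\ref{eq:derivative3}) is. Here your argument adds real value: the paper simply asserts that (\ref{eq:derivative3}) is invertible ``due to the assumption $\langle x; y\rangle_G \in \tSp$'', whereas your trace argument --- $Y$ supported off the diagonal and edges, $Z = A_G^{-1}YA_G^{-1}$ vanishing there, hence $\trace(U^2) = 0$ for $U = A_G^{-1/2}YA_G^{-1/2}$, forcing $Y = 0$ --- supplies a clean proof of precisely that step (equivalently, one can note that $Q_D(A^{-1}\otimes A^{-1})D_p\tQD^T = 2\,Q_DM_p(A^{-1}\otimes A^{-1})M_pQ_D^T$ is positive definite because $M_pQ_D^T$ has full column rank, the same observation you use for the inverse appearing in (\ref{eq:derivative1})). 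Your verification steps are all sound: the cancellation in the zero-pattern constraint uses the commutation $M_p(A_G^{-1}\otimes A_G^{-1}) = (A_G^{-1}\otimes A_G^{-1})M_p$ stated in Section \ref{sec:main}, and $Q_KM_pQ_D^T = 0$ holds since symmetrised non-edge positions are disjoint from the positions read off by $Q_K$.
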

\begin{theorem} \label{th:1}
Let $(\hV_n)_{n\in\N}$ be a sequence of random $p\times p$ matrices such that $\sqrt{n} \vec\, (\hV_n-V)$ converges in distribution to a $p^2$-valued random vector $Z$ for some fixed matrix $V \in \Sp$. 
\bECR
\item \label{th:1(I)}
Then $\sqrt{n} \vec\, \{ h_G(\hV_n) - h_G(V) \} \to \dsD h_G(V) Z$  in distribution.
\item \label{th:1(II)}
If additionally $Z$ is normal with mean zero and covariance matrix  
\be \label{eq:W_V}
	W_V = 2 \sigma_1 M_p (V \otimes V)\ + \ \sigma_2 \vec V (\vec V)^T
\ee
for some scalars $\sigma_1 \ge 0$ and $\sigma_2 \ge - 2\sigma_1/p$, then $\dsD h_G(V) Z$ is $p^2$-variate normal with mean zero and covariance matrix
\be \label{eq:W_{V,G}1}
	W_{V,G} = 2 \sigma_1  \dsD h_G(V) \left( V \otimes V \right) \left\{ \dsD h_G(V) \right\}^T
	\ + \ \sigma_2 \vec V_G (\vec V_G)^T,
\ee
where $V_G$ denotes $h_G(V)$.
\item \label{th:1(III)}
If the assumptions of part (\ref{th:1(II)}) hold and $V^{-1} \in \Sp(G)$ , i.e., $V = h_G(V)$, then $W_{V,G}$ reduces to
\be \label{eq:W_{V,G}2}
	W_{V,G} = 2 \sigma_1 M_p 
	\left[ 
			V\!\otimes\! V - Q_D^T 
			\left\{ Q_D M_p (V^{-1}\!\otimes\! V^{-1}) Q_D^T \right\}^{-1} 
			Q_D  M_p
	\right]
	\ + \ \sigma_2 \vec V (\vec V)^T.
\ee
\item \label{th:1(IV)}
Letting $u = Q_K\! \vec\,(V^{-1})$ and $\hat{u}_G = Q_K\!\vec\,\{ h_G(V_n)^{-1}\}$, we have under the assumptions of part (\ref{th:1(III)}) that 
\[
	\sqrt{n}(\hat{u}_G - u) \to N_{m-q}\left(\, 0,\, W_{u,G} \right)
\]
in distribution with 
\be \label{eq:W_{u,G}}
	W_{u,G} = 2\sigma_1 \left\{ \tQK D_p^T (V \otimes V) D_p \tQK^T \right\}^{-1} + \sigma_2 u u^T.
\ee
\end{enumerate} 
\end{theorem}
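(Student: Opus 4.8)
\emph{Parts (I) and (II).} The overall plan is to dispatch the first two parts by the delta method and the last two by the matrix algebra of $M_p$, $D_p$, $Q_D$ and $Q_K$. Part (I) is immediate: by Proposition \ref{prop:1}(\ref{th:1.1}) the map $h_G$ is differentiable at $V\in\Sp$, so $\sqrt n\,\vec(\hV_n-V)\to Z$ gives $\sqrt n\,\vec\{h_G(\hV_n)-h_G(V)\}\to\dsD h_G(V)Z$ in distribution. For Part (II) a linear image of a centred normal vector is centred normal with covariance $\dsD h_G(V)W_V\{\dsD h_G(V)\}^T$, and it remains to bring this into the form (\ref{eq:W_{V,G}1}). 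Two structural facts do this. First, by (\ref{eq:derivative1}) every summand of $\dsD h_G(V)$ ends in the idempotent factor $M_p$, whence $\dsD h_G(V)M_p=\dsD h_G(V)$; this turns the $\sigma_1$-part $2\sigma_1\,\dsD h_G(V)M_p(V\otimes V)\{\dsD h_G(V)\}^T$ into $2\sigma_1\,\dsD h_G(V)(V\otimes V)\{\dsD h_G(V)\}^T$. Second, $h_G$ is positively homogeneous of degree one, since scaling the argument in the defining system (\ref{eq:h_G}) scales its solution; Euler's relation then yields $\dsD h_G(V)\vec V=\vec h_G(V)=\vec V_G$, so the rank-one $\sigma_2$-part becomes $\sigma_2\,\vec V_G(\vec V_G)^T$.

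\emph{Part (III).} Here I would substitute $V_G=V$, so that $\dsD h_G(V)=M_p-M_p Q_D^T R^{-1}Q_D(V^{-1}\otimes V^{-1})M_p$ with $R=Q_D M_p(V^{-1}\otimes V^{-1})Q_D^T$, and expand the quadratic form $\dsD h_G(V)(V\otimes V)\{\dsD h_G(V)\}^T$ into four terms. Writing $B=V\otimes V$ and using repeatedly the identities $M_p BM_p=M_p B=BM_p$ and $M_p B^{-1}M_p=M_p B^{-1}=B^{-1}M_p$ (so that factors $B^{-1}B$ cancel inside the products, and $Q_D M_p B^{-1}M_p Q_D^T=R$), the three terms other than $M_p B$ each collapse to $M_p Q_D^T R^{-1}Q_D M_p$ and combine with signs $-,-,+$, leaving exactly $M_p B-M_p Q_D^T R^{-1}Q_D M_p$, which is (\ref{eq:W_{V,G}2}); the $\sigma_2$-part is unchanged because $V_G=V$.

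\emph{Part (IV).} Since $h_G(V)=V$, Part (I) gives $\sqrt n\,\vec\{h_G(\hV_n)-V\}\to\dsD h_G(V)Z$; composing with the inversion map $S\mapsto S^{-1}$ (differentiable on $\Sp$ with derivative $-(V^{-1}\otimes V^{-1})$ at $V$) and the selection $Q_K$ yields $\sqrt n(\hat u_G-u)\to -Q_K(V^{-1}\otimes V^{-1})\dsD h_G(V)Z$ in distribution, a centred normal vector with covariance $W_{u,G}=Q_K(V^{-1}\otimes V^{-1})W_{V,G}(V^{-1}\otimes V^{-1})Q_K^T$. Substituting (\ref{eq:W_{V,G}2}) for $W_{V,G}$, the $\sigma_2$-part reduces to $\sigma_2 uu^T$ because $(V^{-1}\otimes V^{-1})\vec V=\vec(V^{-1})$ and $Q_K\vec(V^{-1})=u$. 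For the $\sigma_1$-part I would pass to the $m$-dimensional coordinates via $M_p=D_p D_p^+$, $Q_K M_p=\tQK D_p^+$ and $Q_D M_p=\tQD D_p^+$, and set $\Theta=D_p^+(V^{-1}\otimes V^{-1})(D_p^+)^T$. A short computation then identifies the $\sigma_1$-coefficient as the Schur complement $\Theta_{KK}-\Theta_{KD}\Theta_{DD}^{-1}\Theta_{DK}$, with $\Theta_{KK}=\tQK\Theta\tQK^T$, $\Theta_{KD}=\tQK\Theta\tQD^T$ and $\Theta_{DD}=\tQD\Theta\tQD^T=R$. The two enabling observations are that $\tQK$ and $\tQD$ are the blocks of an $m\times m$ permutation matrix (whence $\tQK\tQK^T=I_{m-q}$, $\tQD\tQD^T=I_q$, $\tQK\tQD^T=0$), so these blocks genuinely partition $\Theta$, and that $\Theta^{-1}=D_p^T(V\otimes V)D_p$, which follows from $(D_p^+)^T D_p^T=M_p$ and $(V^{-1}\otimes V^{-1})M_p(V\otimes V)=M_p$. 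By the Schur-complement inversion formula the Schur complement equals $\{(\Theta^{-1})_{KK}\}^{-1}=\{\tQK D_p^T(V\otimes V)D_p\tQK^T\}^{-1}$, which is (\ref{eq:W_{u,G}}).

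\emph{Main obstacle.} The two delta-method steps are routine; the real work is algebraic. In Part (III) the bookkeeping of the four-term collapse must be organised carefully around the three $M_p$-identities. The genuine crux is Part (IV): recognising that the explicit covariance-domain formula for $W_{V,G}$ becomes, after reduction to the $v$-coordinates, a Schur complement, and that inverting it reproduces the compact concentration-domain form in (\ref{eq:W_{u,G}}). Establishing the permutation structure of $(\tQK,\tQD)$ and the inversion relation $\Theta^{-1}=D_p^T(V\otimes V)D_p$ are the two lemmas that make this final identification work.
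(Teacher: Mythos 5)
Your proposal is correct and follows essentially the same route as the paper: delta method for part (I), direct matrix calculus for parts (II)--(III), and for part (IV) exactly the paper's argument --- pass to the $m$-dimensional coordinates via $\Theta=D_p^+(V^{-1}\otimes V^{-1})(D_p^+)^T$ (the paper's $\Omega$), recognise the $\sigma_1$-part as the Schur complement $\Theta_{KK}-\Theta_{KD}\Theta_{DD}^{-1}\Theta_{DK}$, and identify it by the partitioned-inverse formula as $\left\{ \tQK D_p^T (V \otimes V) D_p \tQK^T \right\}^{-1}$. The only additions are details the paper subsumes under ``straightforward matrix calculus,'' notably your Euler-relation argument for $\dsD h_G(V)\vec V=\vec V_G$, which is a clean way to supply that step.
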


\begin{remark} \mbox{{}\\{}}
\bECR
\item
The assumption (\ref{eq:W_V}) on the covariance matrix  of $\vec Z$ in Theorem \ref{th:1} (\ref{th:1(II)}) may appear somewhat arbitrary. In fact, it is equivalent to require, along with normality, that $Z = (T \otimes T) Z$ in distribution for any matrix $T \in \Rpp$ such that $V^{-1/2} T V^{1/2}$ is orthogonal \citep[see also][Corollary 1]{Tyler1982}. This asymptotic invariance property is often encountered when studying the distribution of scatter estimators. It holds, for example, for affine equivariant scatter estimators at elliptical distributions, cf.~Lemma \ref{lem:ae}.
\item
The usual application of Theorem \ref{th:1} will be that $\hV_n$ is a scatter estimator of the unknown scatter matrix $V$. If $V^{-1} \in \Sp(G)$, then $u$ is simply the relevant part of $V^{-1}$ with all zeros and symmetry redundancies removed. In particular, $W_{u,G}$ is a full-rank matrix.
\een
\end{remark}

\section{Affine equivariant scatter estimators at elliptical distributions}
\label{sec:aese}
We describe a general situation where Theorem \ref{th:1} %(\ref{cor:1(III)}) 
applies. Consider the class $\Ee_p$ of all $p$-dimensional, continuous, elliptical distributions, i.e., distributions possessing a $p$-dimensional Lebesgue density $f$ of the form
\begin{equation} \label{eq:density}
	f(x) = \det(S)^{-\frac{1}{2}} g\big\{(x-\mu)^T S^{-1} (x-\mu)\big\}
\end{equation}
for some $\mu \in \mathds{R}^p$, $S \in \Sp$ and $g:[0,\infty) \to [0,\infty)$ such that $f$ integrates to 1. 
Let $E_p(\mu,S,g)$ denote the distribution described by (\ref{eq:density}). 
Note that it is not necessary to assume in general that the elliptical distribution possesses second moments or even first moments.
For a random sample  $X_1,\ldots,X_n$ let $\X_n = (X_1,\ldots,X_n)^T$ denote the $n \times p$ data matrix. Let further $\hS_n$ be an $\Ss_p$-valued scatter estimator satisfying Assumptions \ref{ass:1} and \ref{ass:2} below.
\begin{assumption}[Affine equivariance]
\label{ass:1}
There is a continuously differentiable function $\xi: \Ss_p \to [0,\infty)$ with $\xi(I_p) = 1$ such that
\[
	\hS_n(\X_n A^T + 1_n b^T ) = \xi(AA^T) A \hS_n(\X_n) A^T
\]	
for any $b \in \R^p$ and full rank $A \in \R^{p \times p}$, where $1_n$ is the $n$-vector consisting of ones. 
\end{assumption}
%
%
%
%
%
%Assumption \ref{ass:1} 
This is a generalization of the strict affine equivariance for scatter estimators, which corresponds to $\xi \equiv 1$. We use this weaker condition since we want to include shape estimators that give no information about the overall scale. They are usually scaled to $\det \hS_n = 1$ and do hence not satisfy strict affine equivariance. An example is the distribution-free $M$-estimator by \citet{Tyler1987}.
\begin{assumption}[Asymptotic normality] \label{ass:2}
The random vectors $X_1,\ldots,X_n$ are independent and identically $E(\mu,S,g)$ distributed, and there is a matrix $V \in \Sp$ such that $\sqrt{n} \vec\, \{ \hS_n(\X_n) - V \}$ converges in distribution to a $p^2$-variate, centered normal variable $Z$.
\end{assumption}
\begin{lemma} \label{lem:ae}
Under Assumptions \ref{ass:1} and \ref{ass:2} we have
\bECR
\item \label{lem:ae(I)}
$V = \eta S$ for some $\eta \ge 0$ and
\item \label{lem:ae(II)}
$Z$ satisfies the assumption of Theorem \ref{th:1} (\ref{th:1(II)}), i.e., it has covariance matrix $W_V$. 
\een
\end{lemma}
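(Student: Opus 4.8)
The plan is to exploit the transformation behaviour of elliptical distributions together with the equivariance identity of Assumption \ref{ass:1}. The starting observation is that if $X \sim E(\mu,S,g)$, then $AX+b \sim E(A\mu + b,\, ASA^T,\, g)$ for any full-rank $A$ and any $b$. Consequently, for any orthogonal matrix $O$ the matrix $A = S^{1/2} O S^{-1/2}$ satisfies $ASA^T = S$, so that with the centring choice $b = (I_p - A)\mu$ the transformed data matrix $\X_n A^T + 1_n b^T$ has exactly the same distribution as $\X_n$ for each fixed $n$. This family of matrices $A$ is precisely the stabiliser of $S$ under congruence, and it generates the invariance needed for both parts.

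For part (\ref{lem:ae(I)}), I would first note that Assumption \ref{ass:2} forces $\hS_n(\X_n) \to V$ in probability, since $\vec\{\hS_n(\X_n) - V\} = n^{-1/2}O_P(1) = o_P(1)$. Because $\X_n A^T + 1_n b^T$ and $\X_n$ are equal in distribution, $\hS_n(\X_n A^T + 1_n b^T)$ also converges in probability to $V$; but by Assumption \ref{ass:1} this estimator equals $\xi(AA^T)\,A\,\hS_n(\X_n)\,A^T$, which converges in probability to $\xi(AA^T)\,A V A^T$. Uniqueness of limits in probability then gives $V = \xi(AA^T)\,A V A^T$. Taking determinants and using $\det A = \det O = \pm 1$ yields $\xi(AA^T)^p = 1$, hence $\xi(AA^T) = 1$ as $\xi \ge 0$, so that $V = A V A^T$. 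Writing $\tilde V = S^{-1/2} V S^{-1/2}$, this reads $\tilde V = O \tilde V O^T$ for every orthogonal $O$; since the only symmetric matrices commuting with the full orthogonal group are scalar multiples of $I_p$, I conclude $\tilde V = \eta I_p$ and therefore $V = \eta S$ (with $\eta > 0$, as $V \in \Sp$).

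For part (\ref{lem:ae(II)}), I would reuse the same transformation with $A = T := S^{1/2} O S^{-1/2}$, for which part (\ref{lem:ae(I)}) already supplies $\xi(AA^T) = 1$, while $V = \eta S$ gives $T V T^T = V$. Since $\X_n A^T + 1_n b^T$ equals $\X_n$ in distribution, $\hS_n(\X_n A^T + 1_n b^T) = T\,\hS_n(\X_n)\,T^T$ has the same law as $\hS_n(\X_n)$, and subtracting $V = T V T^T$ inside the $\vec$ operator gives
\[
	\sqrt{n}\,\vec\{\hS_n(\X_n A^T + 1_n b^T) - V\} \;=\; (T \otimes T)\,\sqrt{n}\,\vec\{\hS_n(\X_n) - V\}.
\]
Letting $n \to \infty$, the left-hand side converges in distribution to $Z$ and the right-hand side to $(T \otimes T)Z$, so $Z$ and $(T \otimes T)Z$ are equal in distribution for every $T$ with $V^{-1/2} T V^{1/2} = O$ orthogonal. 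By the characterisation recalled in the remark following Theorem \ref{th:1} (via \citet{Tyler1982}), this invariance together with the normality from Assumption \ref{ass:2} is equivalent to $Z$ having covariance matrix $W_V$ of the form (\ref{eq:W_V}), which is the claim.

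The main obstacle, and the step deserving the most care, is the passage from the finite-sample equivariance identity to a statement about the limiting law $Z$: one must be precise that $\X_n A^T + 1_n b^T$ and $\X_n$ coincide in distribution for every fixed $n$, so that the two sides of the displayed identity are equal in law at each $n$ and the limits can be taken on both sides, and one must invoke that convergence in distribution is preserved under the fixed linear map $z \mapsto (T \otimes T)z$. The determinant argument pinning down $\xi(AA^T) = 1$ is the other point where the weak equivariance of Assumption \ref{ass:1}, rather than strict equivariance, must be handled explicitly. Everything else reduces to the elementary group-invariance fact that $O \tilde V O^T = \tilde V$ for all orthogonal $O$ forces $\tilde V$ to be a scalar matrix, and to quoting the covariance characterisation from the remark.
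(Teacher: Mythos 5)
Your proposal is correct and takes essentially the same route as the paper's own proof: use the distributional invariance of the elliptical sample under the stabiliser transformations $T = S^{1/2} O S^{-1/2}$ together with Assumption \ref{ass:1}, deduce $V = \eta S$ from invariance under the orthogonal group, establish $Z \sim (T \otimes T)Z$ via the continuous mapping theorem, and conclude with Corollary 1 of \citet{Tyler1982}. The only (minor) difference is that the paper proves part (\ref{lem:ae(I)}) by first reducing to the spherical case $\mu = 0$, $S = I_p$, where $\xi(OO^T) = \xi(I_p) = 1$ disposes of the scale factor automatically, and then extends to general $S$ by equivariance, whereas your determinant argument pins down $\xi(AA^T) = 1$ directly for general $S$ --- which in fact handles the $\xi$ factor in part (\ref{lem:ae(II)}) more explicitly than the paper does.
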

The class of scatter estimators satisfying Assumptions \ref{ass:1} and \ref{ass:2} is large. One important motivation for considering alternatives to the sample covariance matrix is the lack of robustness of the latter. Over the last decades, the robustness literature has produced many proposals of affine equivariant, robust estimators. Prominent examples of such estimators are $M$-estimators \citep[e.g.][]{Maronna1976}, Stahel-Donoho estimators, S-estimators \citep[e.g.][]{Davies1987}, C$M$-estimator \citep[][]{Kent1996}, Oja sign and rank matrices \citep{Ollila2003,Ollila2004}. See, e.g., the overview article by \citet{Zuo2006} or the book by \citet*{Maronna2006} for further reading. Having outlined the general situation, we want to take a look at three specific examples. 
\begin{example}[Sample covariance matrix]
The sample covariance matrix $\hSigma_n$ fulfils Assumption \ref{ass:1} and, if the fourth moments of $E_p(\mu,S,g)$ are finite, i.e., if $\int_{\R^p} ||x||^4 g(||x||^2) dx < \infty$, then $\hat{\Sigma}_n(\X_n)$ fulfils also Assumption \ref{ass:2}. Hence by Lemma \ref{lem:ae}, the conditions of Theorem \ref{th:1} (\ref{th:1(II)}) are met. The scalars $\sigma_1$ and $\sigma_2$ are identified as
$\sigma_1 = 1 + \kappa/3$ and $\sigma_2 = \kappa/3$, where $\kappa$ denotes the excess kurtosis of any component of $X \sim E_p(\mu,S,g)$. 
Assuming further that the data is normal, i.e.\ that $g(y) = (2\pi)^{-p/2} \exp(-y/2)$, $y \ge 0$, then $\kappa = 0$ and $S = \var(X) = V$, i.e. the scalar $\eta$ in Lemma \ref{lem:ae} equals 1. If we let $k = Q_K \vec\, (\Sigma^{-1})$ and $\hat{k}_G = Q_K \vec\, [\{h_G(\hSigma_n)\}^{-1}]$, we have in particular by part (\ref{th:1(IV)}) of Theorem \ref{th:1} that
\[
	\sqrt{n} (\hat{k}_G - k) \to N_{m-q}\left(\ 0\, ,\ 2 \left\{ \tQK D_p^T (\Sigma \otimes \Sigma) D_p \tQK^T \right\}^{-1} \right)
\]
in distribution. This result is also given in a much different notation in \citet[][Section 5.3]{Roverato1998}.
\end{example}

\begin{example}[Elliptical maximum likelihood estimator] \label{ex:emle}
Consider a fixed function $g$ and the maximum likelihood estimator $(\hat{\mu}_g, \hS_g)$ of $(\mu, S)$ in the elliptical family
\[
	\Ee_p(g) = \left\{ E_p(\mu,S,g)\, \middle|\,  \mu \in \R^p, S \in \Sp \right\}. 
\]
Letting $\hS_g = \hat{K}_g^{-1}$, the maximum likelihood estimator is  the solution to the maximization problem 
\be \label{eq:emle}
	(\hat{\mu}_g, \hat{K}_g) = \argmax_{\mu \in \R^p, K \in \Sp} \left[ n \log \det K + 2 \sum\nolimits_{i=1}^n \log g\left\{ (X_i - \mu)^T K (X_i - \mu)  \right\} \right].
\ee
For results on the existence and uniqueness of the solution see, e.g., \citet{kent:tyler:1991}. Any solution to (\ref{eq:emle}) fulfils Assumption \ref{ass:1}. Under the usual regularity conditions on the density \citep[][pp.~429--430]{lehmann:1983}, we have that, if the data $X_1,\ldots,X_n$ stem from the distribution $E_p(\mu,S,g)$, the elliptical maximum likelihood estimator $\hS_g$ fulfils also Assumption~\ref{ass:2} and, by Lemma \ref{lem:ae}, the conditions of Theorem \ref{th:1} (\ref{th:1(II)}). The scalars are $\eta = 1$, 
\[
		\sigma_1  =  \frac{p(p+2)}{E\left\{R^2 u^2(R)\right\}},  \qquad 
	\sigma_2  =  - \frac{2\sigma_1(1-\sigma_1)}{2 + p(1-\sigma_1)},
\] 
where $R = (X - \mu)^T S^{-1} (X-\mu)$ for $X \sim E_p(\mu,S,g)$ and $u(y) = -2 g'(y)/g(y)$, $y \ge 0$, see \citet{Tyler1982}. 
\end{example}

\begin{example}[Multivariate $M$-estimators] \label{ex:M}
The $M$-estimators of multivariate location and scatter $(\hat{\mu}_n, \hS_n)$ are generalizations of the maximum likelihood estimators obtained by replacing $-2 \log g$ in (\ref{eq:emle}) with an arbitrary function $\rho$. An $M$-estimator can then be expressed as the solution to the minimization problem
\be \label{eq:M}
	(\hat{\mu}_n, \hS_n) = \argmin_{\mu \in \R^p, \Sigma \in \Sp} \left[ \sum\nolimits_{i=1}^n \rho\left\{ (X_i - \mu)^T \Sigma^{-1} (X_i - \mu)  \right\} + n \log \det \Sigma \right].
\ee
A more general definition for the $M$-estimates of multivariate location and scatter is given as any solution to the following simultaneous  
$M$-estimating equations 
\be \label{eq:Mest}
\begin{cases}
	\displaystyle
 \  0 = \sum\nolimits_{i=1}^n  u_1(\hat{R}_i)(X_i - \hmu_n) ,  \\[8pt] %[1.0ex]
    \displaystyle
 \  \hS_n = n^{-1} \sum\nolimits_{i=1}^n u_2(\hat{R}_i) (X_i-\hmu_n)(X_i-\hmu_n)^T, \\
\end{cases}
\ee
where $\hat{R}_i = (X_i-\hmu_n)^T \hS_n^{-1}(X_i-\hmu_n)$, for some functions $u_1$ and $u_2$, see \citet{Maronna1976} or \citet{Huber2009}.  
For the special case $u_1 = u_2 = u$, where $u(s) = \rho'(s)$, equations (\ref{eq:Mest}) yield the critical points of (\ref{eq:M}). 
Any solution to (\ref{eq:Mest}) fulfils Assumption \ref{ass:1}. Under general regularity conditions \cite[]{Maronna1976} the multivariate
$M$-estimators are asymptotically normal. Also, if the data represent a random sample from the distribution $E_p(\mu,S,g)$, then the $M$-estimators of scatter satisfy Assumption~\ref{ass:2} and hence the conditions of Theorem \ref{th:1} (\ref{th:1(II)}). The scalars are 
\[
		\sigma_1  =  \frac{(p+2)^2 \gamma_1}{(2\gamma_2 + p)^2},  \qquad 
	\sigma_2  =  \gamma_2^{-1}\left\{(\gamma_1-1)-\frac{2\gamma_1(\gamma_2 -1)(p+\{p+4\}\gamma_2)}{(2\gamma_2 + p)^2}\right\},
\] 
where $\gamma_1 = E[\phi_2^2(\eta R)]/\{p(p+2)\}$ and $\gamma_2 = E[\eta R \phi_2'(\eta R)]/p$, with $\phi_2(s) = su_2(s)$ and $\eta$ being the solution to $E[\phi_2(\eta R)] = p$, see \citet{Tyler1982}. 
\end{example}

\section{Graphical $M$-estimates} 
\label{sec:Mgest}
Pursuing Example \ref{ex:emle} above a little further, we call, for a given graph $G$ and a fixed function $g$, 
\[
	\Ee_p(g,G) = \left\{ \ E_p(\mu,S,g)\  \middle| \  
		\mu \in \R^p,\  S^{-1} \in \Sp(G) \,
	 \right\}
\]
the elliptical graphical model induced by $g$ and $G$. 
%Assuming $g$ to be known 
Within this model, the estimator $\hS_{g,P} = h_G(\hS_g)$ provides a sensible estimate for $S$\!, where $\hS_g$ is the elliptical maximum likelihood estimator introduced in Example \ref{ex:emle}. We call $\hS_{g,P}$ the plug-in maximum likelihood estimator. An alternative is the actual maximum likelihood estimator of $S$ in the model $\Ee_p(g,G)$. Define $\hS_{g,\mle} = \hK_{g,\mle}^{-1}$ and $(\hmu_{g,\mle}, \hK_{g,\mle})$ as the solution of 
\be \label{eq:gemle}
	(\hmu_{g,\mle}, \hK_{g,\mle}) = \argmax_{\mu \in \R^p, K \in \Sp(G)} 
	\left[ 
		n \log \det K 
		+ 2 \sum\nolimits_{i=1}^n \log g\left\{ (X_i - \mu)^T K (X_i - \mu)  \right\} 
	\right].
\ee
We call this estimator the graphical maximum likelihood estimator. It is of interest to compare the estimators $\hS_{g,\mle}$ and $\hS_{g,P}$. The maximum likelihood estimator proves to be most efficient in many situations. 
The function $h_G$ was derived from considerations for maximum likelihood estimation in Gaussian graphical models. Thus, we expect the plug-in maximum likelihood estimator to be less efficient than the proper, graphical maximum likelihood estimator in a non-normal elliptical graphical model. 
%The questions is how large the suspected efficiency loss is. 
We show in the following though that the suspected loss in efficiency is nil asymptotically. We treat this question within the more general framework of $M$-estimators.

In the following, let $\hS_n$ denote an $M$-estimator of scatter, i.e.\ $\hS_n$ is the scatter part of the solution $(\hmu_n,\hS_n)$ of the simultaneous $M$-estimating equations (\ref{eq:Mest}). Suppressing the dependence on $G$, we call $(\hmu_P,\hS_P) = \left(\hmu_n,h_G(\hS_n)\right)$ the plug-in $M$-estimators of location and scatter under $G$. Also, analogously to the graphical maximum likelihood estimators we introduce the graphical $M$-estimators of multivariate location and scatter under $G$, denoted $(\hmu_M, \hS_M)$, as a solution to
 \be \label{eq:Mgest}
	(\hmu_{M}, \hK_{M}) = \argmax_{\mu \in \R^p, K \in \Sp(G)} 
	\left[ 
	   n \log \det K \, -\,  \sum\nolimits_{i=1}^n \rho\left\{ (X_i - \mu)^T K (X_i - \mu)\right\}  
	\right],
\ee
where $\hS_M = \hK_M^{-1}$, 
or more generally as a solution to the $M$-estimating equations
\begin{equation} \label{eq:Mgest1}
\begin{cases}
	\displaystyle
 \  0 = \sum\nolimits_{i=1}^n  u_1(\hat{R}_{i,M})(X_i - \hmu_M) ,  \\[8pt] %[1.0ex]
  \displaystyle
 \  (\hS_M)_{j,k} =  e_j^T \left\{ n ^{-1}\sum\nolimits_{i=1}^n u_2(\hat{R}_{i,M}) (X_i-\hmu_M)(X_i-\hmu_M)^T \right\} e_k,
 			&	\quad    \{j,k\} \in E \  \vee  \ i = j, \\[10pt]
 	\displaystyle		
 \  (\hS_M^{-1})_{j,k}  = 0,     
 			& \quad    \{j,k\} \notin E,  \ i \neq j,
 \end{cases}
\ee
where $\hat{R}_{i,M} = (X_i-\hmu_M)^T \hS_M^{-1}(X_i-\hmu_M)$.
%, and $e_j$, $1 \le j \le p$ are the unit vectors in $\R^p$. 
The special case $u_1 = u_2 = \rho'$ corresponds to the critical points of (\ref{eq:Mgest}). The proof of this last statement is given in the appendix. It is worth noting that, in general, knowing  $A_{j,k}$ for 
$(j,k) \in K(G)$ and  $(A^{-1})_{j,k}$  for $(j,k) \in D(G)$ uniquely determines the symmetric positive definite matrix $A$, see e.g.\ Theorem 1 in \citet{Speed1986}. Thus (\ref{eq:Mgest1}) consists of $p(p+3)/2-q$ equations to be solved for the same number of unknowns. This may be more clearly visible when we write %(\ref{eq:Mgest1})
$\hS_M$ as 
\[
	\hS_M = \hK_M^{-1}, \qquad  \hK_M = \matpp\left( D_p \tQK^T \hk_M \right),
\] 
where $\hk_M$ is a vector of length $m-q$ and $(\hmu_M,\hk_M)$ the solution of 
\be \label{eq:Mgest2}
\begin{cases}
	\displaystyle
 \  0 = \sum\nolimits_{i=1}^n  u_1(\hat{R}_{i,M})(X_i - \hmu_M) ,  \\[8pt] %[1.0ex]
  \displaystyle
 \  0 = Q_K \vec \left\{ n \hS_M \ - \ \sum\nolimits_{i=1}^n u_2(\hat{R}_{i,M}) (X_i-\hmu_M)(X_i-\hmu_M)^T \right\}, \\
 \end{cases}
\ee  
where, as before, $\hat{R}_{i,M} = (X_i-\hmu_M)^T \hK_M (X_i-\hmu_M)$.  

Suppose now that $X_1, \ldots, X_n$ represents a random sample from $E_p(\mu,S,g)$. As previously noted,
the $M$-estimator $\hS_n$ fulfils Assumption \ref{ass:1} and, under general conditions, also Assumption \ref{ass:2}, and 
so Lemma \ref{lem:ae} applies. Sufficient conditions for Assumption \ref{ass:2} to hold are given in Assumption \ref{ass:an} 
of the appendix. We also explicitly state the following condition.
\begin{assumption}[Conditions on $u_1$ and $u_2$] \label{ass:u}
The functions $u_1$ and $u_2$ are non-increasing, while the functions $\phi_1(s) = s u_1(s)$ and
$\phi_2(s) = s u_2(s)$ are non-decreasing.
\end{assumption}
It turns out  the plug-in approach based on a full $M$-estimate and the graphical $M$-estimate approach are 
asymptotically equivalent. 
\begin{theorem} \label{th:asymeq}
Let $X_1, \ldots, X_n$ be independent and identically $E_p(\mu,S,g)$ distributed with $S^{-1} \in \Sp(G)$. If the functions $u_1$, $u_2$ and $g$ are such that Assumptions \ref{ass:u} and \ref{ass:an} are satisfied, then 
$\sqrt{n}\{(\hmu_P,\vec\hS_P) - (\hmu_M,\vec\hS_M)\} \to 0$ in probability.
\end{theorem}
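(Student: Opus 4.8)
The plan is to realize both estimators as roots of one and the same estimating function and then to invoke the standard fact that an approximate root of a smooth estimating equation with nonsingular limiting Jacobian lies within $o_P(n^{-1/2})$ of the exact root. Write $V = \eta S$ as in Lemma \ref{lem:ae}, so that $V^{-1} \in \Sp(G)$ and hence $h_G(V) = V$. Parametrise the constrained model by $(\mu, T)$ with $T^{-1}\in\Sp(G)$ (equivalently by $(\mu, Q_K\vec\, T^{-1})$, as in the paper's $\hk_M$) and let $\Psi_n(\mu,T)$ denote the graphical estimating function of (\ref{eq:Mgest2}), where the weights are evaluated at $r_i = (X_i-\mu)^T T^{-1}(X_i-\mu)$. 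By construction $\Psi_n(\hmu_M,\hS_M) = 0$. The entire proof then reduces to showing that the plug-in estimator is an approximate root, i.e.
\[
	\Psi_n(\hmu_P,\hS_P) = o_P(\sqrt n).
\]

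First I would record that the plug-in estimator satisfies exactly the relations defining $\Psi_n = 0$, except that the weights are evaluated at the unrestricted Mahalanobis distances $\hat R_i = (X_i-\hmu_n)^T\hS_n^{-1}(X_i-\hmu_n)$ rather than at $(X_i-\hmu_P)^T\hS_P^{-1}(X_i-\hmu_P)$. Indeed $\hmu_P = \hmu_n$ solves the first equation of (\ref{eq:Mest}), while $(\hS_P)_{j,k} = (\hS_n)_{j,k}$ for $(j,k)\in K(G)$ together with the scatter equation of (\ref{eq:Mest}) gives $Q_K\vec\{ n\hS_P - \sum_i u_2(\hat R_i)(X_i-\hmu_n)(X_i-\hmu_n)^T \} = 0$. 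Subtracting these two vanishing expressions from $\Psi_n(\hmu_P,\hS_P)$ leaves only the effect of replacing $\hS_n^{-1}$ by $\hS_P^{-1}$ inside $u_1$ and $u_2$. Setting $\Delta = \hS_P^{-1}-\hS_n^{-1} = h_G(\hS_n)^{-1}-\hS_n^{-1} = O_P(n^{-1/2})$ and Taylor-expanding $u_1,u_2$ to first order, the two blocks of $n^{-1/2}\Psi_n(\hmu_P,\hS_P)$ become, up to $o_P(1)$ remainders controlled by the integrability conditions in Assumption \ref{ass:an}, the contractions of the averages $n^{-1}\sum_i u_1'(\hat R_i)(X_i-\hmu_P)^{}(X_i-\hmu_P)^T(\cdot)(X_i-\hmu_P)$ and $n^{-1}\sum_i u_2'(\hat R_i)\{(X_i-\hmu_P)^T(\cdot)(X_i-\hmu_P)\}(X_i-\hmu_P)(X_i-\hmu_P)^T$ against $\sqrt n\,\Delta$.

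The location block is then dispatched by symmetry: its population limit is $E[u_1'(R)\{(X-\mu)^T\Delta_0(X-\mu)\}(X-\mu)]$ with $\Delta_0 = \lim\sqrt n\,\Delta$, an odd function of $X-\mu$, which vanishes because $E_p(\mu,S,g)$ is symmetric about $\mu$; hence this block is $o_P(\sqrt n)$. The hard part will be the scatter block, whose limit is a genuine fourth-order moment. Here I would use the elliptical fourth-moment identity $E[u_2'(R)\{(X-\mu)^T \Delta_0 (X-\mu)\}(X-\mu)(X-\mu)^T] = c_1\trace(V\Delta_0)V + c_2\, V\Delta_0 V$ (a consequence of spherical symmetry after standardising $X-\mu = V^{1/2}U r$), combined with the defining geometry of $h_G$. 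Since $h_G(\hS_n)$ and $\hS_n$ coincide on every position in $K(G)$, the limit $N = \lim\sqrt n(\hS_P-\hS_n)$ is supported on $D(G)$; a first-order expansion of the inverse then gives $\Delta_0 = -V^{-1}N V^{-1}$, whence $V\Delta_0 V = -N$ and $\trace(V\Delta_0) = -\trace(V^{-1}N) = 0$, the last equality because $V^{-1}\in\Sp(G)$ vanishes precisely on $D(G)$, where $N$ is supported. Consequently $c_1\trace(V\Delta_0)V + c_2\,V\Delta_0 V = -c_2 N$ is supported on $D(G)$ and is therefore annihilated by $Q_K$, so the scatter block is $o_P(\sqrt n)$ as well.

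Finally I would close with the master $M$-estimation step. Both $(\hmu_P,\hS_P)$ and $(\hmu_M,\hS_M)$ are consistent for $(\mu,V)$ and $\sqrt n$-consistent — the former via Assumption \ref{ass:an}, Proposition \ref{prop:1}(\ref{th:1.1}) and the delta method (Theorem \ref{th:1}(\ref{th:1(I)})), the latter from the asymptotic theory of the constrained equations (\ref{eq:Mgest2}) under Assumptions \ref{ass:u} and \ref{ass:an} — and the normalised Jacobian $n^{-1}\dsD\Psi_n$ converges uniformly near $(\mu,V)$ to a nonsingular limit $J$. A one-term Taylor expansion $0 = \Psi_n(\hmu_M,\hS_M) = \Psi_n(\hmu_P,\hS_P) + \dsD\Psi_n(\bar\theta)\{(\hmu_M,\hS_M)-(\hmu_P,\hS_P)\}$ then gives $\sqrt n\{(\hmu_P,\hS_P)-(\hmu_M,\hS_M)\} = \{n^{-1}\dsD\Psi_n(\bar\theta)\}^{-1}\, n^{-1/2}\Psi_n(\hmu_P,\hS_P) = O_P(1)\,o_P(1) = o_P(1)$ in the chosen coordinates, and smoothness of $(\mu, Q_K\vec\, T^{-1})\mapsto(\mu,\vec T)$ transfers this to $\sqrt n\{(\hmu_P,\vec\hS_P)-(\hmu_M,\vec\hS_M)\}\to 0$ in probability, as claimed. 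I expect the scatter-block cancellation of the previous paragraph to be the only genuinely delicate point; the remaining steps are routine once the smoothness and moment bounds of Assumption \ref{ass:an} are used to justify the Taylor expansions and the uniform law of large numbers.
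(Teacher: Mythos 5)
Your strategy is genuinely different from the paper's, and its novel part is sound. The paper never evaluates the graphical estimating function at the plug-in estimator: it writes the Assumption \ref{ass:an} (\ref{enum:an:3})-type linear expansions of \emph{both} systems (\ref{eq:Mest}) and (\ref{eq:Mgest1}) around the common population limit $(\mu,V)$ and subtracts them, so that the shared random term $\ave\{u_2(R_i)(X_i-\mu)(X_i-\mu)^T\}-V$ cancels identically -- no Taylor expansion of $u_1,u_2$ in the weights is needed there -- leaving $(e_j^T\otimes e_k^T)B_2\sqrt{n}\vec\,(\hS_n-\hS_M)\to 0$ for $(j,k)\in K(G)$, which is then closed into the finite system $CY_n\to 0$ using $(S^{-1})_{r,c}=0$ on $D(G)$. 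You instead show that $(\hmu_P,\hS_P)$ is an $o_P(\sqrt{n})$-approximate root of the graphical score, which forces the fourth-moment identity to do its work at that earlier stage; your cancellation ($V\Delta_0V=-N$ supported on $D(G)$, $\trace(V\Delta_0)=-\trace(V^{-1}N)=0$, hence annihilation by $Q_K$, with constants $c_1=b_2$, $c_2=2b_2$) is correct and is the exact counterpart of the paper's use of the disjoint supports of $\hS_P-\hS_n$ and $S^{-1}$. A small stylistic flaw: $N$ and $\Delta_0$ are limits in distribution, hence random; but since $Q_K\vec\,(\hS_P-\hS_n)=0$ holds exactly for every $n$ by (\ref{eq:h_G}), the argument goes through if phrased in terms of the $O_P(1)$ quantities $\sqrt{n}\,\Delta$ and $\sqrt{n}(\hS_P-\hS_n)$ directly.

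The genuine gap is in the step you call routine. The master step needs the limit $J$ of $n^{-1}\dsD\Psi_n$ to be nonsingular (both for the $\sqrt{n}$-consistency of the exact root $(\hmu_M,\hS_M)$ and for the final inversion), and this is precisely where Assumption \ref{ass:u} must be consumed; your proof asserts it but never verifies it, and uses Assumption \ref{ass:u} only in a passing citation. This verification is not a formality -- it is what the bulk of the paper's proof consists of: computing $B_1=b_1I_p$ and showing $b_1>0$; computing $B_2$, diagonalising $B_o$, identifying the smallest eigenvalue $\lambda = 1+(p+2)b_2 = E\{R\phi_2'(R)\}/p$ and showing $\lambda>0$; and then still arguing that the reduced $(m-q)\times(m-q)$ matrix $C$ is nonsingular (a rank-one-update computation: $\det C=(1+2b_2)^{m-q-1}\{1+(p+2)b_2\}>0$). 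Moreover, your $J$ is not the paper's $C$: you work in concentration coordinates $k=Q_K\vec\,T^{-1}$, so invertibility needs its own argument. One that works: if $A$ is symmetric, supported on $K(G)$, and $Q_K\vec\,\{(1+2b_2)VAV+b_2\trace(VA)V\}=0$, then pairing with $V^{-1}$ (which vanishes on $D(G)$) gives $\{1+(p+2)b_2\}\trace(AV)=0$, hence $\trace(AV)=0$ and $(VAV)_{j,k}=0$ on $K(G)$; disjointness of supports then yields $\trace(AVAV)=\|V^{1/2}AV^{1/2}\|_F^2=0$, so $A=0$ -- and this uses exactly $b_2<0$ and $1+(p+2)b_2>0$, i.e.\ Assumption \ref{ass:u}. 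A secondary, more minor issue: Assumption \ref{ass:an} (\ref{enum:an:3}) as stated supplies expansions only along sequences that solve the estimating equations \emph{exactly}, whereas your master step applies the same expansion at the plug-in point, which is only an approximate root; that strengthening should be made explicit, although it holds under the same kind of regularity.
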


The interesting fact that the plug-in and the graphical $M$-estimator are asymptotically equivalent at elliptical distributions is favourable for the plug-in $M$-estimator. The unconstrained $M$-estimator is well studied, existence and uniqueness are guaranteed for data in sufficiently general position, and algorithms for its computation have been shown to converge in theory and proven to work sufficiently fast in practice. 

On the other hand, a thorough assessment of the properties of the graphical $M$-estimator including existence, uniqueness and finite-sample properties, is yet due and goes beyond the scope of this paper. Also, the graphical $M$-estimator is presumably harder to compute. It can be solved by a double-loop, IRS-type algorithm, as proposed, e.g., by \citet{Finegold2011}
for the maximum likelihood estimate based on the elliptical $t$-distribution, where each iteration consists of a complete IPS algorithm \citep[cf.][]{Speed1986}. 
The construction of a reliable single-loop algorithm is also an open research question.

Thus, altogether, one can recommend to use the plug-in estimator for moderate to large sample sizes. Simulations show, however, that the graphical $M$-estimator can be substantially more efficient at small samples. Furthermore, the graphical $M$-estimator is computable for fewer observations. The existence of the unconstrained $M$-estimate and thus the plug-in $M$-estimator requires at least $p+1$ data points in general position. More generally, any robust, affine equivariant estimator requires at least $p+1$ data points \citep{Tyler2010}. For decomposable models $G$, the sample size must only be as large as the largest clique of $G$ for the graphical $M$-estimate to be computable. It is to be expected that results concerning the existence of the Gaussian graphical maximum likelihood estimator \citep{Buhl1993, uhler:2012} for general graphs $G$ can be extended to graphical $M$-estimators. 

\section{A statistical application}
In Sections \ref{sec:aese} and \ref{sec:Mgest} we have studied the problem of estimating a positive definite scatter matrix subject to the condition that it contains zero-entries in the inverse at specific off-diagonal positions, which are given by a graph $G$. In this section we want to exemplify the benefit of these considerations for the statistical analysis. 
Let in the following $\hS_n$ be any affine equivariant, asymptotically normal scatter estimator, and $\hS_G$ a corresponding constrained estimate, i.e. either the plug-in estimate $\hS_G = h_G(\hS_n)$ or, if $\hS_n$ is the full $M$-estimate satisfying (\ref{eq:Mest}), %$\hS_G$ may as well be 
the graphical $M$-estimate satisfying (\ref{eq:Mgest1}).
We have derived the asymptotic distribution of $\hS_G$, which allows to construct estimators and tests for any aspect of scatter within the covariance selection model $G$. 
An example is the deviance test (\ref{eq:gen.dev}), which tests for a smaller model $G_0$, i.e., if the true scatter matrix $S$ satisfies some further zero partial correlation restrictions, additional to the ones already given by $G$. By incorporating the knowledge about the dependence structure that is mediated through the graph $G$, one is able to obtain more efficient statistical methods. Depending on the true parameter values, the gain in asymptotic efficiency can be quite large, but also nil, as Example \ref{ex:chordless-p-cycle} below demonstrates.
In this context, the scale-free aspects of scatter, i.e., those that remain invariant under overall scale changes, which include all aspects of dependence, such as correlation, partial correlation, principal components, ratios of eigenvalues, etc, are of particular interest. Their asymptotic distribution further simplifies, since the second term in (\ref{eq:W_{V,G}2}), related to $\sigma_2$, vanishes, and also the correction factor $\eta$ from Lemma \ref{lem:ae} cancels. For details see \citet{Tyler1983}. Example \ref{ex:chordless-p-cycle} is such a case. 
\begin{example}[Chordless-$p$-cycle] \label{ex:chordless-p-cycle} \rm
Consider the situation of $p$ variables and the chordless-$p$-cycle as graph $G = (V,E)$, i.e., 
$E = \{ \{i,i+1\}, \{1,p\} \mid i = 1,\ldots,p-1 \}$, which is, except for the trivial case $p = 3$, a non-decomposable graph. For $p = 7$, it is depicted in Figure \ref{fig:1}.
\begin{figure}[t]
\centering
\includegraphics[scale=0.43]{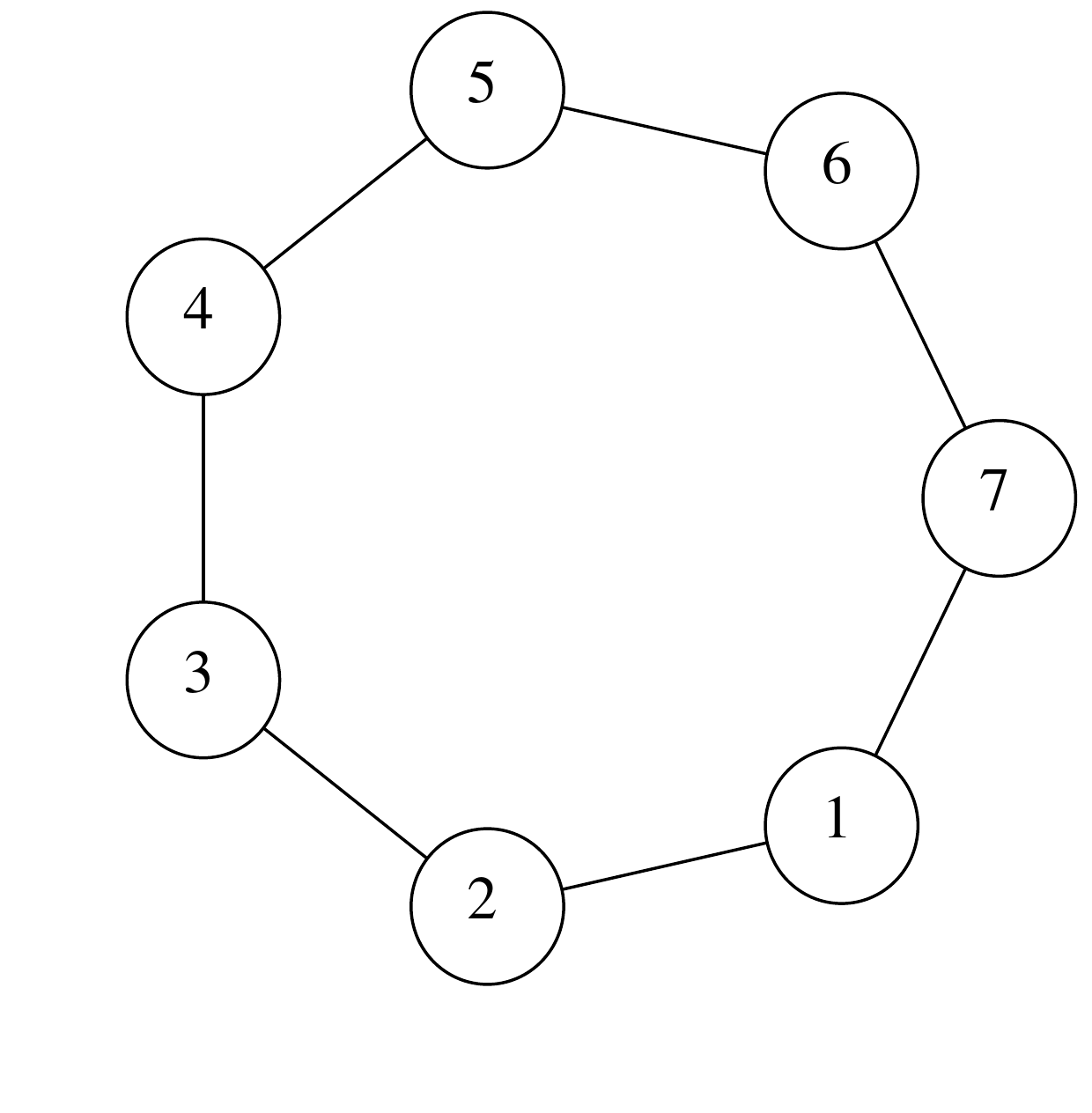}
\caption{Example graph: chordless-$7$-cycle} 
\label{fig:1}
\end{figure}
Assume that the data stem from a $p$-variate elliptical distribution $X \sim E_p(\mu,S,g)$. We fix a shape matrix $S$ fulfilling the graph $G$: All non-zero partial correlations have the same value $-1/2 < c < 1/2$, and the diagonal elements of $S$ are all equal, their specific not being of interest. This choice leads to a positive definite shape matrix $S$, which can be deduced from results about circulant matrices \citep[e.g.][]{gray:2006}. 
Assume further, we want to estimate the partial correlation $p_{1,2}$ between the first and second component of $X$ given all remaining components. Let 
\[
	\pi: \Ss^+_p \to \Ss_p: A \mapsto -\, A_D^{-1/2} A\, A_D^{-1/2},
\]
denote the function that maps the concentration matrix onto the corresponding matrix of pairwise partial correlations, cf.~\citet[][Chapter 5]{Whittaker1990}.
Here $A_D$ denotes the diagonal matrix that has the same diagonal as $A \in \Rpp$, and $A_D^{-1/2}$ is short for $(A_D)^{-1/2}$. With this notation, the parameter $p_{1,2}$ of interest can be written as
%\[
	$p_{1,2} = Q_{\{(2,1)\}} \vec\,\pi(S^{-1})$,
%\]
where, following the notational convention introduced at the beginning of Section \ref{sec:main}, the matrix $Q_{\{(2,1)\}}$ is of dimension $1 \times p^2$ and picks the second element of $\vec\, \pi(S^{-1})$. Let $\hS_n$ be a scatter estimator satisfying Assumptions \ref{ass:1} and \ref{ass:2}. We have two possible estimators for $p_{1,2}$ based upon $\hS_n$:  
%\begin{enumerate}[(1)]
%\item 
the unconstrained estimator 
$\hat{p}_{1,2} = Q_{\{(2,1)\}} \vec\,\pi(\hK_n)$
%\[ 
%	\hat{p}_{1,2} = Q_{\{(2,1)\}} \vec\,\pi\left(\hK_n\right)
%\]
and 
%\item 
the graph-constrained estimator 
$\hat{p}_{1,2;G} = Q_{\{(2,1)\}} \vec\,\pi(\hK_G)$,
%\[
%	\hat{p}_{1,2;G} = Q_{\{(2,1)\}} \vec\,\pi\left(\hK_G\right),
%\]
where $\hK_n = \hS_n^{-1}$ and $\hK_G = \{h_G(\hS_n)\}^{-1}$. The estimator $\hat{p}_{1,2;G}$ takes into account the information that $S^{-1} \in \Sp(G)$.
%\end{enumerate}
The derivative of $\pi$ is
\[
	\dsD \pi(A) = - M_p \left\{\pi(A) \otimes A_D^{-1}\right\}J_p \ - \ \left( A_D^{-1/2} \otimes A_D^{-1/2} \right) M_p,
\qquad A \in \Sp,
\]
where $J_p = \sum_{i=1}^p e_i^{} e_i^T \otimes e_i^{} e_i^T$, see the proof of Proposition 1 in \citet{Vogel2011}. Thus by means of the delta method we can compute from (\ref{eq:W_V}) and (\ref{eq:W_{u,G}}) the asymptotic variances of $\hat{p}_{1,2}$ and $\hat{p}_{1,2;G}$, respectively:
\[
	ASV(\hat{p}_{1,2}) = 2 \sigma_1 Q_{\{(2,1)\}} \dsD \pi(K) (S \otimes S)^{-1} \left\{\dsD \pi(K)\right\}^T Q_{\{(2,1)\}}^T,
\]
\[
	ASV(\hat{p}_{1,2;G}) = 2 \sigma_1 
		Q_{\{(2,1)\}} \dsD \pi(K) \Gamma	
		\left\{ \Gamma^T (S \otimes S) \Gamma \right\}^{-1}
		\Gamma^T \left\{\dsD \pi(K)\right\}^T Q_{\{(2,1)\}}^T,
\]
where $\Gamma = D_p \tQK^T$. The matrix $\Gamma$ serves as an inverse operator to $Q_K$, i.e., it maps $k = Q_K \vec K$ back to $\vec K$ such that $K$ is symmetric. 
The partial correlation is a scale-invariant property of the shape matrix $S$, the scalar $\sigma_2$ and the correction factor $\eta$ both vanish.
The asymptotic relative efficiency $ARE(\hat{p}_{1,2;G},\hat{p}_{1,2}) = ASV(\hat{p}_{1,2}) / ASV(\hat{p}_{1,2;G})$
of the constrained estimator $\hat{p}_{1,2;G}$ with respect to the unconstrained estimator $\hat{p}_{1,2}$ is always greater than or equal to 1. This asymptotic relative efficiency is the same for any pair of partial correlation estimates that are derived from the same scatter estimate $\hS_n$. Specific numbers for several values of $c$ and $p$ are given in Table \ref{tab:1}.
\end{example}
\begin{table}[t]
{\small %must be inside \begin{table} ... \end{table} 
\caption{Asymptotic relative efficiency of a graph-constrained partial correlation estimator with respect to the corresponding unconstrained estimator}%{%
	\begin{tabular}{c|cccccccccccccc} 
		c	& \multicolumn{14}{c@{\ }}{ dimension $p$ } \\[.6ex]
	    & 4 &  5 &  6 &  7 &  8 &  9 & 10 & 11 & 12 & 13 &   20 &  30 & 50 \\[0.7ex]	
	    \hline
	$0$ &  $1.00$ &  $1.00$ &  $1.00$ &  $1.00$ &  $1.00$ &  $1.00$ &  $1.00$ &  $1.00$ &  $1.00$ &  $1.00$ &  $1.00$ &  $1.00$ &  $1.00$  \\
	$-0.05$ &  $1.01$ &  $1.01$ &  $1.01$ &  $1.01$ &  $1.01$ &  $1.01$ &  $1.01$ &  $1.01$ &  $1.01$ &    $1.01$ &   $1.01$ &   $1.01$ &  $1.01$ \\
	$-0.1$ &  $1.02$ &  $1.02$ &  $1.02$ &  $1.02$ &  $1.02$ &  $1.02$ &  $1.02$ &  $1.02$ &  $1.02$ &  $1.02$ &    $1.02$ &   $1.02$ &  $1.02$ \\
	$-0.2$ &  $1.08$ &  $1.09$ &  $1.09$ &    $1.09$ &  $1.09$ &  $1.09$ &  $1.09$ &  $1.09$ &  $1.09$ &  $1.09$ &    $1.09$ &    $1.09$ &  $1.09$ \\
	$-0.3$ &  $1.18$ &  $1.24$ &  $1.23$ &  $1.23$ &  $1.23$ &  $1.23$ &  $1.23$ &  $1.23$ &  $1.23$ &  $1.23$ &    $1.23$ &   $1.23$ &  $1.23$ \\
	$-0.4$ &  $1.32$ &  $1.55$ &  $1.49$ &  $1.54$ &  $1.52$ &  $1.54$ &  $1.53$ &  $1.53$ &  $1.53$ &  $1.53$ &    $1.53$ &   $1.53$ &  $1.53$ \\
	$-0.49$ &  $1.48$ &  $2.27$ &  $1.93$ &  $2.43$ &  $2.12$ &  $2.44$ &  $2.22$ &  $2.43$ &  $2.27$ &  $2.41$ &  $2.35$ &    $2.36$ &  $2.36$ \\
	\end{tabular} \label{tab:1}
}%\small endet
\end{table}

\section{Discussion}
A covariance selection model is just one instance of a model where some further structure on the covariance matrix of multivariate data is assumed, which allows to work with fewer parameters.
When we want to robustly analyse such structured covariance models, as well as in many other situations, 
there are two basic approaches of constructing robust estimates:
One is to simply use a robust estimate instead of the usual, non-robust estimate, here the sample covariance matrix, and apply any subsequent analysis in an analogous manner. This is the plug-in approach. Often, estimates are defined as the optimizing point of some criterion function. An alternative approach is thus to alter the criterion function such that the influence of outlying observations is reduced. This approach is usually referred to as $M$-estimation. In the case of Gaussian graphical models we have that the maximum likelihood estimator $\hat{K}_G$ for the concentration matrix $K$ is the maximizing point of 
\begin{eqnarray}
	\varphi(K) & = & \log \det K  - \trace( K \hSigma_n ) \label{eq:phi.1}\\
	           & = & \log \det K  
	              - n^{-1} \sum\nolimits_{i=1}^n \log \gamma
	              \left\{ (X_i - \bar{X}_n)^T K (X_i - \bar{X}_n) \ \right\}
	               \label{eq:phi.2}
\end{eqnarray}
within the set $\Sp(G)$, where $\gamma(y) = \exp(y)$, $y \ge 0$. Representation (\ref{eq:phi.1}) immediately suggests the plug-in approach, whereas representation (\ref{eq:phi.2}) points to the $M$-approach. The results of the previous section indicate that for an appropriate choice of the replacements for $\hSigma_n$ and $\gamma$, both approaches are asymptotically equivalent. It is a very interesting research question to quantify under what conditions and for which structured covariance models this holds true.

\section*{Acknowledgement} 
The authors thank Roland Fried for very stimulating discussions initiating this research. The first author was supported in part by the German Research Foundation. 

\appendix

\section*{Appendix: Proofs}
%\subsection*{Proofs}

Before proving Proposition \ref{prop:1} we have to introduce some more notation and, in particular, clarify what we understand as the derivative of a function that maps symmetric matrices to symmetric matrices. The function $v(\cdot)$, introduced at the beginning of Section \ref{sec:main}, is properly defined as
$v: \Ss_p \to \R^m: A \mapsto D_p^+ \vec A$ and its inverse as $v^{-1}: \R^m \to \Ss_p: a \mapsto \matpp D_p a$.
We use the following notational convention: For any $A \in \Rpp$ we write $\tilde{A}$ for $v(A)$. The use of \, $\tilde{}$ \, henceforth indicates an $m$-dimensional object. For functions $h:\Ss_p \to \Ss_p$, we write $\tilde{h}$ to denote the corresponding function mapping $v(A)$ to $v\{h(A)\}$. Furthermore, for any set $\Cc \subseteq \Ss_p$ let
\[
	\tilde\Cc	= \left\{ x \in \R^m \, \middle| \, v^{-1}(x) \in \Cc \right\}.
\]
Then $v$ is also a bijection from $\Sp$ to $\tSp$, and we will henceforth consider it restricted to this space. The set $\tSp$ is open in $\R^m$. We say that a function $h:\Cc \subseteq \Ss_p \to \Ss_p$ is continuously differentiable on $\Cc$ if $\tilde\Cc$ is open in $\R^m$ and $\tilde{h}:\tilde\Cc \to \R^m$ is continuously differentiable.
Letting $\dsD\tilde{h}(x)$ denote the Jacobi matrix or derivative of $\tilde{h}$ at point $x \in \R^m$, we define the derivative $\dsD h(A)$ of $h: \Cc \subseteq \Ss_p \to \Ss_p$ at point $A \in \Cc$ as the $p^2 \times p^2$ matrix given by
\be \label{eq:derivative2}
	\dsD h(A) \ = \ D_p \dsD \tilde{h} (\tilde{A}) D_p^+.
\ee
This definition is determined by the requirements 
(i) $\dsD \tilde{h} (\tilde{A}) \ = \ D_p^+ \dsD h(A) D_p$ \ and 
(ii) $K_p \dsD h(A) \ = \ \dsD h(A) K_p \ = \ \dsD h(A)$,
which, roughly speaking, say that 
(i) $\dsD h(A)$ ought to be an appropriate representation of $\dsD \tilde{h}(\tilde{A})$ and 
(ii) $\dsD h(A)$ should also reflect the symmetry that the argument as well as the value of $h$ possess. Thus, in order to show the differentiability of $h_G$, we will consider the function $\tilde{h}_G$ and compute its derivative, from which by (\ref{eq:derivative2}) the expression for $\dsD h_G$ given in (\ref{eq:derivative1}) readily follows.

We declare some further notation related to the graph $G$. Let
\[
	\tPG = 
	\left(
		\renewcommand{\arraystretch}{1.5}
		\begin{array}{c}
		\tQKG \\
		\hdashline
		\tQDG
		\end{array}
	\right)
	\quad \in \R^{m \times m}.
\]
The matrix $\tPG$ is orthogonal.
For $a \in \R^{m-q}$ and $b \in \R^q$ define
\[
	\langle a;b\rangle_G = \tPG^T 
	\begin{pmatrix}
		a \\
		b
	\end{pmatrix} \ \in \ \R^m, 
\]
i.e., the operation $\langle\cdot;\cdot\rangle_G$ fills an $m$-vector with the elements of $a$ and $b$ in such a way that $\tQKG \langle a;b\rangle_G = a$ and $\tQDG \langle a;b\rangle_G = b$. Let $(\tSp)_G$ be the set of all $(m-q)$-vectors for which there is a $y \in \R^q$ such that $\langle x;y\rangle_G \in \tSp$. The set $(\tSp)_G$ is open in $\R^{m-q}$.

Although, by going from $h_G$ to $\tilde{h}_G$, we have eliminated the redundancy due to the symmetry of the matrices, the function $\tilde{h}_G$ contains further redundancies. Recall the original definition of $h_G$, given by (\ref{eq:h_G}):
The function $h_G$ maps an unconstrained covariance estimate $\hat{\Sigma}_n$ to the corresponding constrained covariance estimate $\hat{\Sigma}_G$ under the model $G$. It takes $p(p+1)/2 - q$ values, $p$ estimated variances $\hat{\sigma}_{i,i}$, $1 \le i \le p$, and $p(p-1)/2 -q$ estimated covariances $\hat{\sigma}_{i,j}$, $\{i,j\} \in E$,
and produces $q$ new values: covariance estimates $\hat{\sigma}_{i,j}$ for $\{i,j\} \notin E$, $i \neq j$. So $h_G$, as well as $\tilde{h}_G$, are actually functions from $\R^{m-q}$ to $\R^q$. They may be further reduced to the function $t_G$, defined by
\[
	t_G: (\tSp)_G 
	%\subset \R^{m-q} 
	\to \R^q: x \mapsto \tQDG \tilde{h}_G\left( \langle x ; y \rangle_G\right),
\]
where $y$ is some $q$-vector such that $\langle x;y\rangle_G \in \tSp$. Then $\tilde{h}_G$ can be expressed as
\be \label{eq:tilde{h}_G.vs.t_G}
	\tilde{h}_G(x) = \tPG^T 
	\begin{pmatrix}
	\tQKG x \\
	t_G\left\{ \tQKG x \right\}
	\end{pmatrix}, 
	\qquad x \in \R^m.
\ee
The function $t_G$, and thus $\tilde{h}_G$, is defined implicitly through the function
\[
	H_G: \tSp \subset \R^m \to \R^q: x \mapsto \tQDG v\left[ \left\{v^{-1}(x)\right\}^{-1}\right].
\]
The inner $^{-1}$ refers to the inverse function of $v$, whereas the outer $^{-1}$ refers to matrix inversion.
For any $x \in (\tSp)_G$, the value $y = t_G(x)$ is the unique solution to 
\be \label{eq:h_G:2}
	H_G\left(\langle x; y \rangle_G \right) \ = \ 0.
\ee
This is a reformulation of (\ref{eq:h_G}), and from the theory of Gaussian graphical models we know that $t_G$ is well defined, i.e.\ that for every $x \in (\tSp)_G$, there is indeed exactly one solution to (\ref{eq:h_G:2}). We are now ready to prove Proposition \ref{prop:1}.
\begin{proof}[Proof of Proposition \ref{prop:1}]
Part (I): 
We prove that $\tilde{h}_G$ is continuously differentiable by means of the implicit function theorem.
Let $x \in (\tSp)_G$ be fixed. There exists a unique $y \in \R^q$ such that 
$H_G\left(\langle x; y \rangle_G \right) = 0$ and $\langle x; y \rangle_G \in \tSp$. The Jacobi matrix of 
$H_G\left(\langle x ;  \cdot \rangle_G \right)$,
i.e.\ the matrix of all partial derivatives of $H_G\left(\langle x ; y \rangle_G \right)$ with respect to $y$, is  
\be \label{eq:derivative3}
	d H_G\left(\langle x ; y \rangle_G \right) / d y  
	\ = \ - Q_{D(G)} (A^{-1} \otimes A^{-1}) D_p \tQDG^T,
\ee
where $A = v^{-1}\left(\langle x ; y \rangle_G \right)$.
Due to the assumption $\langle x; y \rangle_G \in \tSp$, (\ref{eq:derivative3}) is an invertible matrix.
By the implicit function theorem \citep[e.g.][Theorem 6.4.1]{Trench2003}, there exists a continuously differentiable function $t_x : U_x \to \R^q$,
defined on some open neighbourhood $U_x$ of $x$ with $U_x \subset (\tSp)_G$, such that $t_x(x) = y$ and 
$H_G\left\{\langle z ; t_x(z) \rangle_G \right\} = 0$ for all $z \in U_x$. Since $t_G$ is the unique function defined on $(\tSp)_G$ that satisfies 
\be \label{eq:deriv3}
	H_G\left\{ \langle z; t_G(z) \rangle_G\right\} = 0 
\ee
for all $z \in (\tSp)_G$, we have $t_x = t_G|_{U_x}$. This holds true for every $x \in (\tSp)_G$, hence $t_G$, and by (\ref{eq:tilde{h}_G.vs.t_G}) also $\tilde{h}_G$, is continuously differentiable.
\par
Part (II): We use implicit differentiation, see e.g.\ \citet[][Theorem 6.4.1]{Trench2003}. Differentiating both sides of (\ref{eq:deriv3}) with respect to $z$ yields
\[
	\dsD t_G(x) =
	- \left[  \frac{d H_G\left\{\langle x ; t_G(x) \rangle_G\right\}}{d y} \right]^{-1}
						\frac{d H_G\left\{\langle x ; t_G(x) \rangle_G\right\}}{d x},
	\qquad x \in (\tSp)_G, %\subset \R^{m-q}, 
\]
where $d H_G\left\{\langle x ; t_G(x) \rangle_G\right\} / d y$ denotes the derivative of $H_G\left(\langle x ; \cdot \rangle_G\right)$ evaluated at the point $t_G(x) \in \R^q$. With the notation introduced above we have
\[
	\dsD t_G(x) =
	- \left\{ 	Q_{D(G)} (A_G^{-1} \otimes A_G^{-1}) D_p \tQDG^T \right\}^{-1}
						Q_{D(G)} (A_G^{-1} \otimes A_G^{-1}) D_p \tQKG^T,
	\quad \ x \in (\tSp)_G,
\]
where $A_G = v^{-1} \left\{\langle x; t_G(x) \rangle_G \right\}$. By (\ref{eq:tilde{h}_G.vs.t_G}) we find further
\[
	\dsD \tilde{h}_G(x)
	\ = \ 
	\tPG^T \,  
	\left(
		\renewcommand{\arraystretch}{1.5}
		\begin{array}{c}
		I_{m-q} \\
		\hdashline
		\dsD t_G\left(\tQKG x \right)
		\end{array}
	\right) \,
	\tQKG 
	\]
	\[
	\ = \ 
	\tQKG^T \tQKG \ - \ 
	\tQDG^T \left\{ 	Q_{D(G)} (A_G^{-1} \otimes A_G^{-1}) D_p \tQDG^T \right\}^{-1}
							\!	Q_{D(G)} (A_G^{-1} \otimes A_G^{-1}) D_p \tQKG^T \tQKG
\]
for any $x \in \tSp \subset \R^m$, where now $A_G$ denotes $v^{-1}\left\{ h_G(x) \right\}$.
With (\ref{eq:derivative2}) and noting that $D_p \tQDG^T = 2 M_p Q_{D(G)}^T$, we arrive at
\be \label{eq:Dh_G}
	\dsD h_G(A) \ = \ 
	M_{p,G}  -  
	M_p Q_{D(G)}^T \left\{ 	Q_{D(G)} (A_G^{-1} \otimes A_G^{-1}) M_p Q_{D(G)}^T \right\}^{-1}
										\!	Q_{D(G)} (A_G^{-1} \otimes A_G^{-1}) M_{p,G}
\ee
for $A \in \Sp$, where $A_G = h_G(A)$ and $M_{p,G} = D_p \tQKG^T \tQKG D_p^+$. The matrix $M_{p,G}$ is obtained from $M_p$ by putting all rows and columns that correspond to non-edge positions of $G$, sub-diagonal as well as super-diagonal, to zero. Noting that $M_p - M_{p,G} = 2 M_p Q_{D(G)}^T Q_{D(G)} M_p$, we find that $M_{p,G}$ may be replaced by $M_p$ in (\ref{eq:Dh_G}), and we obtain the expression given in (\ref{eq:derivative1}). This completes the proof of Proposition \ref{prop:1}.
\end{proof}
The general method of proof applied here is also described in \citet{Benichou1989}.
\begin{proof}[Proof of Theorem \ref{th:1}]
Part (\ref{th:1(I)}) is a version of the delta method, parts (\ref{th:1(II)}-\ref{th:1(IV)}) follow by straightforward matrix calculus. For part (\ref{th:1(IV)}), one obtains by the delta method directly from (\ref{eq:W_{V,G}2})
\[
	W_{u,G} = 2 \sigma_1 \left\{ 
		\tQK \Omega \tQK^T - \tQK \Omega \tQD^T \left(\tQD \Omega \tQD^T\right)^{-1} \tQD \Omega \tQK^T	
	\right\} + \sigma_2 u u^T,
\]
where $\Omega = D_p^+ (U \otimes U)(D_p^+)^T$, $U = V^{-1}$ and $u = Q_K \vec U = \tQK D_p^+ \vec U$. By the %important
formula for the inverse of a partitioned matrix one identifies the matrix inside $\{ \}$ as the inverse of 
$\tQK \Omega^{-1} \tQK^T =$ \ $\tQK D_p^T (V\otimes V) D_p \tQK^T$.
\end{proof}
\begin{proof}[Proof of Lemma \ref{lem:ae}]
Part (\ref{lem:ae(I)}): Consider the special case $\mu = 0$, $S = I_p$ and let $X_1,\ldots,X_n$ be independent and identically $E_p(0,I_p,g)$ distributed. Then, for any orthogonal $O \in \Rpp$, let $\X_n^* = \X_n O^T$. We have $\X \sim \X_n^*$ and by Assumption \ref{ass:1} also $\hS_n(\X_n) \sim O \hS_n(\X_n) O^T$, where $\sim$ denotes equality in distribution. Assumption \ref{ass:2} implies $\hS_n(\X_n) \to V$ in probability. Hence by the continuous mapping theorem, $V = O V O^T$ for all orthogonal matrices $O$, hence $V = \eta I_p$ for some $\eta \ge 0$. The result for general $S$ follows again by the affine equivariance of $\hS_n$ and the continuous mapping theorem. We may restrict $\eta$ to positive values, since $S$ and $V$ are assumed to be positive definite. 

Part (\ref{lem:ae(II)}): Let $X_1,\ldots,X_n$ be $E_p(\mu,S,g)$ distributed. Let $O \in \Rpp$ again be orthogonal and $T = S^{1/2}O S^{-1/2}$. Due to the ellipticity we have $\X_n \sim (\X_n - 1_n \mu^T) T^T + 1_n\mu^T$, and by Assumption \ref{ass:1} also $ \sqrt{n} T\{\hS_n(\X_n)-\eta S\} T^T  \sim \sqrt{n}\{\hS_n(\X_n)-\eta S\}$, where, as before, $\sim$ denotes equality in distribution. By Assumption \ref{ass:2} and the continuous mapping theorem, we find that $Z$ fulfils the invariance property described in Remark 1 (I). The form (\ref{eq:W_V}) of the covariance matrix follows with \citet[][Corollary 1]{Tyler1982}.
\end{proof}

Derivation of (\ref{eq:Mgest1}) for the case $u_1 = u_2 = u$. \ 
Let $L_o(\mu,K)$ denote the criterion function in (\ref{eq:gemle}) and let $S = K^{-1}$. With $R_i = (X_i-\mu)^T K (X_i-\mu)$, we have
$\partial R_i/\partial \mu = - 2 (X_i-\mu)^T K$ 
and  
$\partial R_i/\partial K_{j,k} = (2-\delta_{j,k})e_j^T(X_i-\mu)(X_i-\mu)^Te_k$,
where $K_{j,k}$ are the elements of $K$, and $\delta_{j,k} = 0$ or $1$ for $j \ne k$ and $j = k$ respectively. 
Also,
$\partial \log\{\det(K)\}/\partial K_{j,k} = (2-\delta_{j,k}) e_j^T S e_k$. 
Thus,
$\partial L_o(\mu,K)/\partial \mu = 2\sum_{i=1}^n u(R_i) \,(X_i-\mu)^T K$, and
$\partial L_o(\mu,K)/\partial K_{j,k} = (2-\delta_{i,j})e_j^T\, \sum_{i=1}^n \big\{-u(R_i) (X_i - \mu)(X_i-\mu)^T + S\big\} e_k$ for $(j,k) \in K(G)$.
Setting these partial derivatives to zero gives (\ref{eq:Mgest1}) with $u_1 = u_2 = u$. 
\hfill $\square$ 

\medskip
Before giving the proof for Theorem \ref{th:asymeq}, we review some general results for $M$-estimating equations. 
Let $X_1, \ldots, X_n$ be a sample in %$\mathcal{X}$, 
an open subset of $\R^p$. 
An M-estimate for a parameter $\theta \in \Theta$, with $\Theta$ being an open subset of $\R^l$, can be defined as a solution $\hat{\theta}$ to the
$M$-estimating equations given by
\be \label{eq:Mequation}
	%n^{-1} 
	\ave \left\{ \psi_j (X_i;\hat{\theta})  \right\} \  = \ 0,  \qquad j = 1, \ldots, l,
\ee
where the average, here as well as in all following occurrences, is taken over $i = 1,\ldots,n$.
When $X_1, \ldots, X_n$ represent a random sample, the asymptotic normality of $M$-estimates is known to hold under very general conditions on the function $\psi = (\psi_1, \ldots, \psi_l)$ and on the underlying distribution $F$. We refer the reader to \citet{Huber2009}, \citet{Hampel1986} or \citet{Maronna2006} for further details. 
Central to the proof of asymptotic normality of an $M$-estimate, and central to our proof of Theorem \ref{th:asymeq}, is the expansion of $\psi(x;\hat{\theta})$ about the population value $\theta_F$. Rather than re-state the somewhat technical conditions needed for the aforementioned expansion to be applicable, we simply assume the following condition holds. 
For convenience, we use the notation $\partial f(x,y_o)/\partial y = \{\partial f(x,y)/\partial y\}|_{y=y_o}$.
\begin{assumption}[M-estimation regularity conditions] \label{ass:an}
The function $\psi$ and the distribution $F$ satisfy sufficient regularity conditions to ensure: 
\begin{enumerate}[(I)]
\item \label{enum:an:1}
There is a unique solution, $\theta_F = T(F)$, to the M-functional equation $E_F \{\psi(X;\theta)\} = 0$. 
\item \label{enum:an:2}
For any sequence $\hat{\theta}$ satisfying (\ref{eq:Mequation}), $\hat{\theta}\to \theta_F$ in probability.  
\item \label{enum:an:3}
For $j = 1, \ldots, l$, we have 
\[
	0 = \ave\{\psi_j(X_i;\theta_F)\} + \sum\nolimits_{k=1}^l (\hat{\theta}_{k} - \theta_k) \left[\ave\{\partial \psi_j(X_i;\theta_F)/\partial \theta_k  \}+ r_{j,k,n}\right],
\] 
where $\theta_F = (\theta_1,\ldots,\theta_l)$ and $r_{j,k,n} \to 0$ in probability as $n \to \infty$. 
\item \label{enum:an:4}
The expectations $b_{j,k} = E_F\{\partial \psi_j(X_i;\theta_F)/ \partial \theta_k\}$ and 
$a_{j,k} = E_F\{\psi_j(X_i;\theta_F)\psi_k(X_i;\theta_F)\}$ exist.
\end{enumerate}
\end{assumption}  
\begin{remark} 
Assumption \ref{ass:an} ensures that $B\{ \sqrt{n}(\hat{\theta} - \theta_F)\} \to N_l(0,A)$ in distribution, where the elements of $A$ and $B$
are $a_{j,k}$ and $b_{j,k}$ respectively. Furthermore, if $B$ is non-singular, then $\sqrt{n}(\hat{\theta} - \theta_F)$ converges
in distribution to a multivariate normal with mean zero and variance-covariance matrix $B^{-1}A(B^T)^{-1}$.
Also, a sufficient condition for Assumption \ref{ass:an} (\ref{enum:an:3}) to hold is that Assumption \ref{ass:an} (\ref{enum:an:2}) holds and $\psi$ has bounded second derivatives. 
\end{remark}
\begin{proof}[Proof of Theorem \ref{th:asymeq}] 
Since $(\hS_P)_{j,k} = (\hS_n)_{j,k}$ for $(j,k) \in K(G)$ and $(\hS_P^{-1})_{j,k} = (\hS_M^{-1})_{j,k} = 0$ for $(j,k) \in D(G)$, 
it is sufficient to show that $\sqrt{n}\{\hmu -\hmu_G\} \to 0$ and
\mbox{$\sqrt{n}\{(\hS_n)_{j,k} -(\hS_M)_{j,k}\} \to 0$} in probability for $(j,k) \in K(G)$. 

Assumption \ref{ass:an} (\ref{enum:an:1}) and (\ref{enum:an:2}) implies that $(\hmu_n,\hS_n)$ and $(\hmu_M,\hS_M)$ converge in probability to the same value, 
namely $(\mu,V)$, with $V =\gamma S$ and $\gamma$ being the unique solution to the equation 
\begin{equation} \label{s}
E\{\phi_2(R)\} = p, \quad \mbox{where} \quad  R = Z^TZ/\gamma, \qquad Z \sim E_p(0,I_p,g). 
\end{equation}
Rather than finding the partial derivatives in Assumption \ref{ass:an} (\ref{enum:an:3}) explicitly, it is easier to use perturbation techniques to obtain the linear expansions.  Consequently, we have for the full $M$-estimate
\begin{equation} \label{mulinear}
		0 = \ave\{u_1(R_i)(X_i-\mu)\} - \{B_{1,n} + o_p(1) \} ( \hmu_n-\mu ), 
\end{equation}
\begin{equation} \label{Vlinear}
		0 = \vec
		\left[\ave\{u_2(R_i)(X_i-\mu)(X_i-\mu)^T\} - V \right] 
		- \{ B_{2,n} + o_p(1) \} \vec\,(\hS_n - V), 
\end{equation}
where 
$R_i = (X_i -\mu)^T V^{-1} (X_i - \mu)$, 
$B_{1,n} = \ave\{u_1(R_i)\} I_p \, + \, 2\ave\{u_1'(R_i)(X_i-\mu)(X_i-\mu)^T\}V^{-1}$ and 
\mbox{$ B_{2,n} = \ave\{u_2'(R_i) (X_i-\mu)(X_i-\mu)^T \otimes (X_i-\mu)(X_i-\mu)^T \} (V^{-1} \otimes V^{-1}) + I_{p^2}$.} 
Likewise, the linear expansions
for (\ref{eq:Mgest1}) are 
\begin{equation} \label{muglinear}
	  0 = \ave\{u_1(R_i)(X-\mu)\} - \{B_{1,n} + o_p(1) \} (\hmu_M-\mu), 
\end{equation}
\begin{equation} \label{Vglinear}
	0 = (e_k^T \otimes e_j^T) 
	\left(
	   \vec \left[\ave\{u_2(R_i)(X-\mu)(X-\mu)^T\} - V \right] 
	    - \{ B_{2,n} + o_p(1)\} \vec\,\{\hS_M - V\}
	\right)
\end{equation}
for $(j,k) \in K(G)$.

Consider the location component. By the law of large numbers, it follows that \mbox{$B_{1,n} \to B_1$} in probability, where
$B_1 = E\{u_1(R)\} I_p + (2/\gamma) ~S^{1/2}E\{u_1'(R)ZZ^T\}S^{-1/2}$, 
with $R$ and $Z$ defined in (\ref{s}). Assumption \ref{ass:an} (\ref{enum:an:4}) assures that $B_1$ exists. 
Evaluating the expectations gives $B_1 = b_1 I_p$, where
$b_1 = E\{u_1(R)\} + 2~E\{R\,u_1'(R)\}/p = (1-2/p)E\{u_1(R)\} + (2/p) E\{\phi_1'(R)\}.$
By Assumption \ref{ass:u}, $b_1 > 0$ and hence $B_1$ is non-singular. This implies that $\sqrt{n}(\hmu_n - \mu)$ and $\sqrt{n}(\hmu_M - \mu)$
converge in distribution to a multivariate normal distributions, and so $\sqrt{n}\{\hmu_n - \hmu_M \}= O_p(1)$. Subtracting (\ref{mulinear})
from (\ref{muglinear}) and multiplying by $\sqrt{n}$ then yields $0 = B_{1,n}\sqrt{n}\{\hmu_n - \hmu_M\} + o_p(1)O_p(1)$. Since,
$B_{1,n} \to b_1 I_p$ in probability, it follows that $\sqrt{n}(\hmu_n - \hmu_M) \to 0$ in probability.

For the scatter component, we again have by the law of large numbers that $B_{2,n} \to B_2$ in probability, where
\[
	B_2 = \gamma^{-2}(S^{1/2} \otimes S^{1/2}) E\{u_2'(R) (ZZ^T \otimes ZZ^T) \} (S^{-1/2} \otimes 	S^{-1/2}) + I_{p^2},
\]
with Assumption \ref{ass:an} (\ref{enum:an:4}) assuring that $B_2$ exists. Evaluating the expectation gives
$B_2 = (S^{1/2} \otimes S^{1/2})B_o(S^{-1/2} \otimes S^{-1/2})$, 
where 
\[
	B_o = (1+ b_2)I_{p^2} + b_2 K_p + b_2 \vec\,(I_p)\vec\,(I_p)^T, 
	\qquad 
	b_2 = E\{R^2u_2'(R)\}/\{p(p+2)\},
\]
thus \mbox{$B_2 = (1+b_2)I_{p^2} + b_2 K_p + b_2 \vec\,(S)\vec\,(S^{-1})^T$.}
The $p^2$ eigenvalues of $B_o$ are $1+2b_2$ repeated $p(p+1)/2 -1$ times, $1$ repeated $p(p-1)/2$ times and $1+(p+2)b_2$, which occurs once.  
Since, by Assumption \ref{ass:u}, $b_2 < 0$, it follows that $\lambda = 1+(p+2)b_2$ is the smallest eigenvalue of $B_o$. 
Since $s^2u_2'(s) = s\phi_2'(s) - \phi_2(s)$ and $E\{\phi_2(s)\} = p$, we have by Assumption \ref{ass:u} that $\lambda = E\{s\phi_2'(s)\}/p > 0$. Hence, $B_o$ and consequently $B_2$, is non-singular. 
 This implies that 
$\sqrt{n} \{(\hS_n)_{j,k} - (\hS_M)_{j,k}\} = O_p(1)$  for $(j,k) \in K(G)$. 
Pre-multiplying (\ref{Vlinear}) by $e_j^T \otimes e_k^T$, subtracting it from (\ref{Vglinear}), and then multiplying by
$\sqrt{n}$ gives 
$0 = (e_j^T \otimes e_k^T)B_{2,n}\sqrt{n} \vec\,\{\hS_n -\hS_M\} + o_p(1)O_p(1)$, 
and so 
$(e_j^T \otimes e_k^T)B_2 \sqrt{n} \vec\,(\hS_n -\hS_M)  \to 0$ in probability for $(j,k) \in K(G)$. 
This last limit can be expressed as
\begin{equation} \label{Vlimit}
 (1+2b_2)\sqrt{n}\{(\hS_n)_{j,k} - (\hS_M)_{j,k}\} 
 	+ S\!_{j,k} \, b_2  \trace\{ S^{-1} \sqrt{n} (\hS_n -\hS_M)\}
 	\ \to \ 0,
 	\qquad (j,k) \in K(G)
\end{equation} 
in probability. Now 
\[
	\trace\{ S^{-1} (\hS_n -\hS_M) \} 
	\ = \ \sum_{(r,c) \in K(G)} (2-\delta_{r,c})(S^{-1})_{r,c} \{(\hS_n)_{r,c} - (\hS_M)_{r,c}\},
\]
since  
$(S^{-1})_{r,c} \{(\hS_n)_{r,c} - (\hS_M)_{r,c}\} = (S^{-1})_{c,r} \{(\hS_n)_{c,r} - (\hS_M)_{c,r}\}$
due to the symmetry and
$(S^{-1})_{r,c} = 0$ for $(r,c) \in D(G)$. 
Recall the elements of $\sqrt{n}\{(\hS_n)_{j,k} - (\hS_M)_{j,k}\}$ for
$(j,k) \in K(G)$ 
can be represented by 
$Y_n = \sqrt{n} Q_K  \vec\,(\hS_n -\hS_M)$. Thus (\ref{Vlimit}) can be expressed as $CY_n \to 0$ in probability for a $(m-q)\times(m-q)$ matrix $C$, the elements of which are specified below. 
For a matrix position $(j,k) \in K(G)$ of some $p\times p$ matrix $D$, say, we let $\tau(j,k) \in \{1,\ldots,m-q\}$ denote the position of the element $D_{j,k}$ in the vector $Q_K \vec D$. The number $\tau(j,k)$ is the rank of $(j,k)$ when ordering the elements of $K(G)$ according to the the ordering $\prec_p$, introduced at the beginning of Section \ref{sec:main}. Then we have for the diagonal elements of $C$, 
\[
	C_{\tau(j,k),\tau(j,k)} \ = \  
	1 + 2 b_2 + b_2 (2-\delta_{j,k})\, S\!_{j,k}\,(S^{-1})_{j,k}, \qquad (j,k) \in K(G),
\]
and for the off-diagonal elements
\[
	C_{\tau(j,k),\tau(r,c)} \ = \  
 	S\!_{j,k} \, b_2 (2-\delta_{r,c})(S^{-1})_{r,c}, 
 	\qquad (j,k), (r,c) \in K(G), (j,k) \neq (r,c). 
\]
The matrix $C$ can be shown to be non-singular and so
$Y_n \to 0$ or equivalently $\sqrt{n}\{(\hS_n)_{j,k} - (\hS_M)_{j,k}\}  \to 0$ in probability for $(j,k) \in K(G)$. This completes the proof of Theorem \ref{th:asymeq}.
\end{proof}

\bibliographystyle{plainnat}

\end{document}